\documentclass{article}
\usepackage{amsmath}
\usepackage{amsfonts}
\usepackage{amssymb}
\usepackage{amsthm}
\usepackage{color}
\usepackage[color=green]{todonotes}
\usepackage{enumerate}
\usepackage{geometry}
\usepackage{listings}
\usepackage{microtype}
\usepackage{rotating}
\usepackage{textcomp}
\usepackage{makeidx}
\usepackage{framed}
\usepackage{array}
\usepackage[hidelinks]{hyperref}
\usepackage[nameinlink,capitalise]{cleveref}
\usepackage{authblk}
\usepackage{comment}
\usepackage[small]{titlesec}

\newcommand{\Z}{\mathbb{Z}}
\newcommand{\N}{\mathbb{N}}
\renewcommand{\P}{\mathbb{P}}
\newcommand{\E}{\mathbb{E}}
\newcommand{\cR}{\mathcal{R}}
\newcommand{\ttau}{\widetilde{\tau}}
\newcommand{\tsigma}{\widetilde{\sigma}}
\newcommand{\cF}{\mathcal{F}}

\DeclareMathOperator{\cov}{cov}
\DeclareMathOperator{\hit}{hit}
\DeclareMathOperator{\mix}{mix}
\DeclareMathOperator{\SRW}{SRW}

\DeclareMathOperator{\e}{e}
\DeclareMathOperator{\TV}{TV}

\theoremstyle{plain} 
\newtheorem{theorem}{Theorem}[section]
\newtheorem*{theorem*}{Theorem}

\newtheorem*{proposition*}{Proposition}
\newtheorem{lemma}[theorem]{Lemma}
\newtheorem*{lemma*}{Lemma}

\theoremstyle{definition} 
\newtheorem{corollary}[theorem]{Corollary}
\newtheorem*{corollary*}{Corollary}
\newtheorem{remark}[theorem]{Remark}
\newtheorem*{remark*}{Remark}

\newtheorem*{fact*}{Fact}

\newtheorem*{definition*}{Definition}

\newtheorem*{example*}{Example}

\newtheorem*{idea*}{Idea}

\newtheorem*{property*}{Property}

\numberwithin{equation}{section}


\frenchspacing
\lstset{keepspaces=true}
\usepackage[english]{babel}

\begin{document}

\title{Cover times for random walk on dynamical percolation}
\date{\vspace{-5ex}}

\author[1]{Maarten Markering}
\affil[1]{University of Cambridge} 
\maketitle  
\let\thefootnote\relax\footnotetext{Date: \today}
\let\thefootnote\relax\footnotetext{Email: mjrm2@cam.ac.uk}
\let\thefootnote\relax\footnotetext{2010 Mathematics Subject Classification: 60K35, 60K37, 60G50}
\let\thefootnote\relax\footnotetext{Key words: Random walk, dynamical percolation, cover times, hitting times}

\begin{abstract}
We study the cover time of random walk on dynamical percolation on the torus $\Z_n^d$ in the subcritical regime. In this model, introduced by Peres, Stauffer and Steif in \cite{Peres2015Random}, each edge updates at rate $\mu$ to open with probability $p$ and closed with probability $1-p$. The random walk jumps along each open edge with rate $1/(2d)$. We prove matching (up to constants) lower and upper bounds for the cover time, which is the first time that the random walk has visited all vertices at least once. Along the way, we also obtain a lower bound on the hitting time of an arbitrary vertex starting from stationarity, improving on the maximum hitting time bounds from \cite{Peres2015Random}.
\end{abstract}

\section{Introduction}

In this paper, we study \emph{random walk on dynamical percolation} on $\Z_n^d$, first introduced by Peres, Stauffer and Steif \cite{Peres2015Random}. We denote the vertex set $\{0,\ldots,n-1\}^d$ of $\Z_n^d$ by $\Z_n^d$ and the edge set by $E(\Z_n^d)$. The model is defined as follows. Each edge refreshes at rate $\mu=\mu_n$. It becomes open with probability $p$ and closed with probability $1-p$. This is the \emph{dynamical percolation model}. We denote the edge set at time $t$ by $\eta_t\in\{0,1\}^{E(\Z_n^d)}$ and call it the \emph{environment}. Here $\eta_t(e)=0$ corresponds to the edge $e$ being closed and $\eta_t(e)=1$ means it is open. We define a continuous time random walk $X=(X_t)_{t\geq0}$ that moves as follows. It chooses one of the $2d$ edges incident to the walker uniformly at rate 1. If the edge is open, it jumps across this edge. If the edge is closed, the walk stays in place. The \emph{full system} is denoted by
\begin{equation}
    (M_t)_{t\geq0}:=((X_t,\eta_t))_{t\geq0}.
\end{equation}
The process $M$ is a reversible Markov process with stationary distribution $u\times\pi_p$, where $u$ is the uniform measure on $\Z_n^d$, and $\pi_p$ is the product Bernoulli measure on $E(\Z_n^d)$. It is important to note that the walk process $(X_t)_{t\geq0}$ is itself not Markovian.

Peres, Stauffer and Steif started the investigation of the model in \cite{Peres2015Random}. Their analysis focused mainly on the regime $p\in(0,p_c(d))$, where $p_c(d)$ is the critical value for bond percolation on $\Z^d$. Their main result was a bound on the mixing time
\begin{equation}
    t_{\mix}(\varepsilon)=\inf\{t\geq0:\max_{x,\eta_0}\|\P_{x,\eta_0}((X_t,\eta_t)=(\cdot,\cdot))-u\times\pi_p\|_{\TV}\leq\varepsilon\}.
\end{equation}

\begin{theorem}[{\cite[Theorem 1.2]{Peres2015Random}}]\label{theorem:mixingupperbound}
    For all $d\geq1$ and $p\in(0,p_c(\Z_n^d))$, there exists a positive constant $C=C(d,p)$ such that for all $n\in\N$ and $\mu\leq1$, we have
\begin{equation}
    t_{\mix}\leq\frac{Cn^2}{\mu}.
\end{equation}
\end{theorem}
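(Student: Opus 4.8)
The plan is to bound the mixing time of the full reversible chain $M=(X,\eta)$ directly, using subcriticality of $p$ to control the geometry seen by the walk, and then to realise the mixing of the walk's position on $\Z_n^d$ through a reflection coupling. The environment alone is easy: $(\eta_t)$ is a product over the $\asymp dn^d$ edges of independent rate‑$\mu$ two‑state chains, each equilibrating to $\pi_p$ in time $O(1/\mu)$, so a union bound shows $(\eta_t)$ mixes in time $O(\mu^{-1}\log n)$, negligible next to $n^2/\mu$. Hence it suffices to fix an arbitrary starting vertex $x$, start the environment from $\pi_p$, and prove that for $t\asymp n^2/\mu$ the law of $X_t$ on $\Z_n^d$ is within $\varepsilon$ of uniform; joint convergence of $(X_t,\eta_t)$ to $u\times\pi_p$ then follows by combining with the environment mixing, because away from the walk the environment performs plain dynamical percolation and near the walk it re‑equilibrates on the scale $1/\mu$, while $\pi_p$ being a product measure means the local environment retains no information about $X_t$ in the limit.

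Since $p<p_c(d)$, the classical exponential decay of cluster radii in subcritical percolation gives a constant $c=c(d,p)>0$ with $\pi_p(\operatorname{diam}\mathcal C_v\ge k)\le e^{-ck}$, uniformly in $n$; so at every time the walk sits in a cluster of $O(1)$ typical diameter (with exponential tails), and while no incident edge refreshes it is trapped there, relaxing to near‑uniform on that bounded cluster in time $O(1)$. This confinement is precisely the source of the $1/\mu$ slowdown: only an edge refresh can let the walk reach a genuinely new vertex, and near the walk such refreshes occur at rate $\asymp\mu$, so the walk makes only $\Theta(1)$ of progress per time $1/\mu$, requiring time $\asymp n^2/\mu$ to cross the torus diffusively.

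Quantitatively, fix the slow scale $T=C_0/\mu$ with $C_0$ a large constant (as $\mu\le1$ we have $T\ge C_0\gg1$, and over an interval of length $T$ each edge refreshes $O(1)$ times in expectation). The key estimate is that, for every coordinate $i$ and every $\cF_{kT}$, the per‑epoch coordinate displacement $D_k^{(i)}:=X_{(k+1)T}^{(i)}-X_{kT}^{(i)}$ satisfies $\E[(D_k^{(i)})^2\mid\cF_{kT}]\ge c>0$: with probability bounded below, during $[kT,(k+1)T]$ one of the two $e_i$‑edges adjacent to the walk's current position refreshes to the open state (integrated rate $\asymp\mu T=\Theta(1)$ of such refreshes, times $p$ for the outcome), and, given that the walk is confined to an $O(1)$ cluster and that it relaxes there in time $O(1)\ll T$, the remaining time $\Theta(1/\mu)\gg1$ lets it cross this edge and, up to conditioning away the $O(1)$ subsequent refreshes that could reconnect the old and new sides, end the epoch with $|D_k^{(i)}|\ge1$. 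Making this precise is the main obstacle, and it is where subcriticality is essential: one needs the $n$‑uniform control that the walk inhabits an $O(1)$‑size cluster; one must handle the conditioning (the environment near $X_{kT}$ given $\cF_{kT}$ is not literally $\pi_p$‑distributed, but after one epoch it is close to a fresh local sample because every nearby edge has refreshed with high probability, an approximation that has to be set up by an explicit coupling); and one must rule out that later refreshes cancel the gain.

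To finish, run a second copy $\widetilde X$ obtained from $X$ by a reflection coupling carried out one coordinate at a time — reflecting, in each not‑yet‑coalesced coordinate, through the midpoint of the two starting positions, with the two environments related by the current reflection map. By equivariance of the dynamics under graph automorphisms of $\Z_n^d$ and invariance of $\pi_p$, the reflected copy has the correct law, so this is legitimate, and coordinate $i$ coalesces when $X^{(i)}$ crosses its reflection axis. It then remains to bound, for a process on $\Z_n$ that over each epoch has $\pm1$ increments per move and conditional displacement second moment $\ge c$, the time to cross a fixed antipodal pair of points: a gambler's‑ruin / Lyapunov‑function estimate gives that this happens within $O(n^2)$ epochs with probability $\ge 1/2$ (with exponentially small failure probability after a constant‑factor more time), hence within $O(n^2/\mu)$ time; running a constant factor longer to push the failure probability below $\varepsilon/d$ and taking a union bound over the $d$ coordinates yields full coalescence, and therefore $t_{\mix}\le Cn^2/\mu$. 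The weak dependence between epochs is absorbed into the coupling error above and does not affect the gambler's‑ruin bound, which uses only the conditional lower bound on the per‑epoch second moment.
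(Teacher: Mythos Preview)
This theorem is not proved in the present paper; it is quoted from \cite{Peres2015Random} and used as a black box. So there is no proof here to compare against. That said, your sketch has a structural gap that would prevent it from going through as written, independently of how \cite{Peres2015Random} argues.

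The problem is the coordinate-by-coordinate reflection coupling. You define $\widetilde X_t=R_t(X_t)$ and $\widetilde\eta_t=R_t(\eta_t)$, where $R_t$ is the reflection in the currently uncoalesced coordinates, and you appeal to equivariance of the dynamics under torus automorphisms to claim $(\widetilde X,\widetilde\eta)$ has the correct law. Equivariance only gives this for a \emph{fixed} automorphism. At a coalescence time $\tau$ for coordinate $i$ the map jumps from $R$ to $R'$; while $R(X_\tau)=R'(X_\tau)$ (the walk is on the reflection axis in coordinate $i$), the environment satisfies $R(\eta_\tau)\neq R'(\eta_\tau)$ in general, so $\widetilde\eta$ has a jump that is not an edge refresh. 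Thus $(\widetilde X,\widetilde\eta)$ is not a sample path of the process, and the coupling is invalid. This issue does not arise for simple random walk on $\Z_n^d$ because there the coordinates are independent and one can reflect them separately; here the environment couples the coordinates and that independence is lost. Patching this by re-randomising the environment at each coalescence time costs a factor you have not controlled, and a single fixed reflection forces the walk to hit one of $2^d$ specific vertices, which takes time of order $n^d/\mu$, not $n^2/\mu$.

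A second, softer gap is the per-epoch variance lower bound $\E[(D_k^{(i)})^2\mid\cF_{kT}]\ge c$. You correctly flag this as ``the main obstacle'', but the heuristic (an incident $e_i$-edge opens, the walk crosses it, and the gain is not undone) is exactly the content that the regeneration-time machinery of \cite{Peres2015Random} (summarised in Section~\ref{sec:preliminaries} of this paper) is built to deliver: one constructs stopping times $\ttau_k$ at which the environment is a fresh sample of $\pi_p^x$, so that $(X_{\ttau_k})_k$ is a genuine symmetric random walk with i.i.d.\ increments of nondegenerate covariance and exponential tails, and then the local CLT (Lemma~\ref{lemma:localCLT}) gives mixing of the position after order $n^2$ regeneration steps, i.e.\ time $O(n^2/\mu)$. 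Your epoch argument is trying to reinvent this without the regeneration structure, and without it you cannot cleanly decouple successive epochs or rule out cancellation of the displacement.
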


They also obtained upper bounds on the maximum expected hitting time of the walk started from the stationary environment. For $y\in\Z_n^d$, let $\sigma_y$ be the first time that the walk visits $y$:
\begin{equation}
    \sigma_y:=\inf\{t\geq0:X_t=y\}
\end{equation}
and define
\begin{equation}
    t_{\hit}:=\max_{x,y\in\Z_n^d}\E_{x,\pi_p}[\sigma_{y}].
\end{equation}

\begin{theorem}[{\cite[Theorem 1.12]{Peres2015Random}}]\label{th:hittingupper}
For all $p\in(0,p_c(d))$, there exist positive constants $C_1=C_1(d,p)$ and $C_2=C_2(d,p)$ such that for all $n\in\N$ and $\mu\leq1$:
\begin{equation}
    \begin{aligned}
    C_1\frac{n^2}{\mu}\leq&\,t_{\hit}\leq C_2\frac{n^2}{\mu}, && d=1,\\
    C_1\frac{n^2\log n}{\mu}\leq&\,t_{\hit}\leq C_2\frac{n^2\log n}{\mu}, && d=2,\\
    C_1\frac{n^d}{\mu}\leq&\,t_{\hit}\leq C_2\frac{n^d}{\mu}, && d\geq3.
    \end{aligned}
\end{equation}
\end{theorem}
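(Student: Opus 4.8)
The plan is to use that, because $p<p_c(d)$, at every time the walker sits in a percolation cluster whose size has an exponential tail: a cluster of $\Z_n^d$ at density $p<p_c(\Z^d)$ is stochastically dominated, outside an event of exponentially small probability that it wraps around the torus, by a cluster of $\Z^d$. Hence the walker leaves its current cluster only when one of its $O(1)$ incident boundary edges refreshes to open, which happens at rate $\Theta(\mu)$, while it equilibrates inside an $O(1)$-sized cluster in time $O(1)$. So on scales between $1/\mu$ and the mixing time $t_{\mix}\asymp n^2/\mu$ the walk should behave like a continuous-time simple random walk on $\Z_n^d$ run at rate $\Theta(\mu)$, and in particular I expect the two-sided on-diagonal estimate $\P_{y,\pi_p}(X_t=y)\asymp(\mu t)^{-d/2}$ for $1/\mu\lesssim t\lesssim n^2/\mu$. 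Both halves of the theorem should then follow from the classical identity relating expected hitting times to the Green's function of a reversible chain.

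Granting the on-diagonal estimate, the rest is essentially a computation. The full chain $M=(X,\eta)$ is reversible with uniform $\Z_n^d$-marginal; writing $A_y=\{y\}\times\{0,1\}^{E(\Z_n^d)}$ so that $\pi(A_y)=n^{-d}$, the Green-function identity gives $\E_{u\times\pi_p}[\sigma_y]=n^{d}\int_0^{\infty}\big(\P_{y,\pi_p}(X_t=y)-n^{-d}\big)\,dt$, with a non-negative integrand by reversibility. The contribution of $t\lesssim1/\mu$ is $\le n^d/\mu$ (there the walk is merely equilibrating in the $O(1)$-cluster of $y$), within the claimed order in every dimension; the contribution of $t\gtrsim t_{\mix}$ is $O(n^2/\mu)$, using the on-diagonal bound at $t=t_{\mix}$ together with the spectral-gap estimate implied by \Cref{theorem:mixingupperbound}, again within the claimed order; and on $[1/\mu,n^2/\mu]$ the estimate makes the integrand $\asymp(\mu t)^{-d/2}$, which meets $n^{-d}$ exactly at $t=n^2/\mu$ and exceeds $2n^{-d}$ once $t\le\delta n^2/\mu$, so this portion of the integral is $\asymp\mu^{-1}\int_1^{n^2}u^{-d/2}\,du$, i.e. $\asymp n/\mu$, $(\log n)/\mu$, $1/\mu$ in dimensions $1,2,\geq3$. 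Multiplying by $n^d$ yields $\E_{u\times\pi_p}[\sigma_y]\asymp n^2/\mu$, $n^2\log n/\mu$, $n^d/\mu$. The lower bound on $t_{\hit}$ is then immediate since $t_{\hit}\ge\E_{u\times\pi_p}[\sigma_y]$ (and by non-negativity one may simply restrict the integral to $[1/\mu,\delta n^2/\mu]$); the upper bound follows from the standard inequality $\E_{x,\pi_p}[\sigma_y]\le C\big(t_{\mix}+\E_{u\times\pi_p}[\sigma_y]\big)$, valid for every $x$, because $t_{\mix}\lesssim n^2/\mu$ is of the claimed order.

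The whole theorem thus reduces to the two-sided on-diagonal estimate, which is the analytic heart. For the upper bound $\P_{y,\pi_p}(X_t=y)\le C(\mu t)^{-d/2}$ I would establish a Nash-type (Faber--Krahn) inequality for $M$: using the exponential cluster tail, a subset of the state space of $M$-measure $\le a$ should be cut off from its complement by $\Omega(\mu\,a^{1-1/d})$ worth of ``refresh conductance'', so that after rescaling time by $\mu$ the chain $M$ has essentially the isoperimetric profile of $\Z_n^d$, and the usual heat-kernel machinery then delivers the decay. For the matching lower bound I would combine a Carne--Varopoulos-type diffusive lower bound with a local-limit / anti-concentration statement for the coarse-grained walk --- the location of $X$ sampled at the successive times it enters a new cluster --- which, again by the cluster tail, makes $O(1)$-sized, asymptotically symmetric steps at rate $\Theta(\mu)$.

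I expect the main obstacle to be exactly this two-sided estimate, and within it two points are genuinely delicate. First, inside a cluster the walk can make $\Theta(t)$ jumps with no net displacement, so the exponential cluster tail must be used carefully to show that this cannot speed up escape (equivalently, to control the conductance profile above). Second, the cluster entered at a cluster-change time is not independent of the past, since the walk has already revealed part of the evolving environment, so a careful revealing-and-resampling scheme --- conditioning on $\eta$ in a bounded window around $X_t$ and refreshing the complement --- is needed to pin down the laws of the successive clusters, and hence of the coarse-grained steps, with enough precision for both directions of the estimate; this is in particular where the two-sidedness responsible for the $\log n$ factor when $d=2$ must be extracted.
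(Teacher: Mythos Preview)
This theorem is not proved in the present paper: it is quoted verbatim from \cite{Peres2015Random}, so there is no ``paper's own proof'' here to compare against. The proof in \cite{Peres2015Random} uses precisely the regeneration-time machinery reproduced in \Cref{sec:preliminaries}: at the times $\ttau_k$ the environment is exactly $\pi_p^{X_{\ttau_k}}$, so $(X_{\ttau_k})_k$ is a genuine symmetric random walk on $\Z_n^d$ with exponential-tail increments, the local CLT of \Cref{lemma:localCLT} gives the two-sided heat-kernel bound, and the hitting-time asymptotics follow by the standard Green's-function computation \emph{for that discrete walk}. Your proposal aims for the on-diagonal bound directly on the full process via a Nash/Faber--Krahn inequality; this is a genuinely different route, but as written it has a gap.

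The gap is the identity $\E_{u\times\pi_p}[\sigma_y]=n^{d}\int_0^{\infty}\big(\P_{y,\pi_p}(X_t=y)-n^{-d}\big)\,dt$. The formula $\pi(z)\,\E_\pi[\sigma_z]=\int_0^\infty(p_t(z,z)-\pi(z))\,dt$ holds for a \emph{single state} $z$ of a reversible chain; here $X$ is not Markov, only $M=(X,\eta)$ is, and $\sigma_y$ is the hitting time of the \emph{set} $A_y=\{y\}\times\{0,1\}^{E}$. For sets the single-state identity does not hold in the form you wrote: the renewal argument behind it requires that after $\sigma_y$ the process restarts from a fixed law, whereas $\eta_{\sigma_y}$ is correlated with the past and is not $\pi_p$. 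What one does get from the occupation-time argument are two-sided inequalities with $\min_{\eta'}\P_{y,\eta'}(X_t=y)$ and $\max_{\eta'}\P_{y,\eta'}(X_t=y)$ in place of $\P_{y,\pi_p}(X_t=y)$, and these extremes over $\eta'$ are not covered by your on-diagonal estimate as stated.

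This is repairable --- after an extra $O((\log n)/\mu)$ all edges have refreshed and the environment is within $o(1)$ of $\pi_p$, so the worst-case $\eta'$ can be absorbed --- but it is exactly the kind of bookkeeping that the regeneration-time approach is designed to avoid. In \cite{Peres2015Random} one never faces this issue because the Green's-function identity is applied to the honest Markov chain $(X_{\ttau_k})_k$ on $\Z_n^d$, and the $1/\mu$ factor enters only through $\E[\ttau_1]$ (\Cref{lemma:taubounded}). Your ``coarse-grained walk'' of cluster-change times is morally the same object, and you correctly flag the dependence of successive clusters on the revealed environment as the delicate point; the regeneration times $\ttau_k$ are precisely the device that makes that dependence disappear.
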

Although sharp asymptotic bounds for the mixing and maximum hitting times have been shown in the subcritical regime, the supercritical regime is not as well understood. A lower bound for the mixing time of order $n^2+\frac{1}{\mu}$ was shown in \cite{Peres2015Random} and it was conjectured that this should also be the correct order for the upper bound. The matching upper bound was shown up to polylogarithmic factors in part of the supercritical regime by Peres, Sousi and Steif in \cite{Peres2018Quenched, Peres2017Mixing}.

Hermon and Sousi in \cite{Hermon2020Comparison} studied the model for general graphs and all parameter values $p$. They obtained mixing and hitting time upper bounds in terms of the corresponding quantities for the simple random walk on the static graph. 

Currently, there are no known results on the \emph{cover time}, which is the first time that the walk has visited all states at least once. This is the focus of the present paper. To define the cover time, for $s,t\geq0$, $s\leq t$, let $\cR[s,t]$ be the set of vertices that the walk $X$ has visited in the time interval~$[s,t]$, i.e.,
\begin{equation}
    \cR[s,t]:=\{y\in\Z_n^d:\exists r\in[s,t]\text{ such that } X_r=y\}.
\end{equation}
Let $\tau_{\cov}$ be the first time that all vertices in $\Z_n^d$ have been visited by $X$, so
\begin{equation}
    \tau_{\cov}:=\inf\{t\geq0:\cR[0,t]=\Z_n^d\}.
\end{equation}
We define the \emph{maximum expected cover time} $t_{\cov}$ as the expectation of $\tau_{\cov}$ starting from the worst possible vertex and environment:
\begin{equation}
    t_{\cov}:=\max_{x\in\Z_n^d,\eta_0\in\{0,1\}^{E(\Z_n^d)}}\E_{x,\eta_0}[\tau_{\cov}].
\end{equation}
Here $\E_{x,\eta_0}$ denotes the expectation of the full system $(M_t)_{t\geq0}$ when $M_0=(x,\eta_0)$. We define $\P_{x,\eta_0}$ analogously. More generally, if $\rho$ and $\pi$ are distributions on $\Z_n^d$ and $E(\Z_n^d)$ respectively, we write $\P_{\rho,\pi}$ and $\E_{\rho,\pi}$ for the law and expectation of $(M_t)_{t\geq0}$ when $M_0\sim(\rho,\pi)$.

Recall $p_c(d)$ is the critical probability for bond percolation on $\Z^d$. We are now ready to state the main theorem of this paper.
\begin{theorem}[Cover time bounds]\label{th:maincover}
For $d\geq 1$ and $p\in (0,p_c(d))$, there exist constants $C_1=C_1(d,p)$ and $C_2=C_2(d,p)$ such that for all $n\in\N$ and $\mu\leq1$ random walk on dynamical percolation on $\Z_n^d$ with parameters $\mu$ and $p$ satisfies
\begin{equation}
    \begin{aligned}
        \frac{C_1 n^2}{\mu}\leq& t_{\cov}\leq\frac{C_2n^2}{\mu}, && d=1,\\
        \frac{C_1 n^2(\log n)^2}{\mu}\leq& t_{\cov}\leq\frac{C_2n^2(\log n)^2}{\mu}, && d=2,\\
        \frac{C_1 n^d\log n}{\mu}\leq& t_{\cov}\leq\frac{C_2n^d\log n}{\mu}, && d\geq3.
    \end{aligned}
\end{equation}
\end{theorem}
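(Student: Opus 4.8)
The plan is to prove the upper and lower bounds separately, using the "slowed-down" comparison to simple random walk on the static torus as the organizing principle, since the key phenomenon is that the walk must wait roughly $1/\mu$ time for a fresh edge configuration before each effective step. For the upper bounds, the natural first step is to decompose time into blocks of length $\Theta(1/\mu)$ and show that the positions of $X$ sampled at the ends of these blocks behave, up to constants, like a lazy simple random walk (SRW) on $\Z_n^d$: over a window of length $c/\mu$ every edge incident to the current location has refreshed with probability bounded below, so with probability bounded below the walk makes an "SRW-like" displacement, and one can couple or stochastically dominate accordingly. Granting this, $t_{\cov}$ for the dynamical model is bounded above by $(C/\mu)$ times the maximum expected cover time of lazy SRW on $\Z_n^d$, and the classical cover time estimates for $\Z_n^d$ (Matthews' method together with the known hitting-time/Green's function asymptotics: $\Theta(n^2)$ for $d=1$, $\Theta(n^2(\log n)^2)$ for $d=2$, $\Theta(n^d \log n)$ for $d\ge 3$) give exactly the stated right-hand sides. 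A cleaner route that avoids a full coupling is to use \Cref{th:hittingupper} directly: Matthews' bound gives $t_{\cov}\le t_{\hit}\cdot\sum_{k=1}^{n^d}1/k = O(t_{\hit}\log n)$, which already yields the correct answer in every dimension $d\ge 2$, and for $d=1$ one argues more carefully (the torus $\Z_n$ is covered once the walk reaches both of the two farthest points, so $t_{\cov}\asymp t_{\hit}\asymp n^2/\mu$ there).

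For the lower bounds the strategy is the standard second-moment/first-moment obstruction argument applied to the set of unvisited vertices, but carried out in the dynamical environment. First I would fix a target scale and show that at time $t$ somewhat below the claimed cover time there is, with probability bounded away from $0$, at least one unvisited vertex. For $d\ge 3$ the cleanest tool is the first-moment method on "late points": using the hitting-time lower bound $\E[\sigma_y]\ge C_1 n^d/\mu$ from \Cref{th:hittingupper} together with a near-exponential tail for $\sigma_y$ (obtained from the Markov-type bound $\P(\sigma_y>t)\ge 1 - t/\E[\sigma_y]$ combined with a regeneration/restart argument to upgrade to $\P_{\rho,\pi}(\sigma_y>t)\ge e^{-Ct\mu/n^d}$), one gets $\E[\#\{y:\sigma_y>t\}]=\sum_y\P(\sigma_y>t)\ge n^d e^{-Ct\mu/n^d}$, which is $\ge 1$ when $t\le c\, n^d\log n/\mu$; a matching second-moment computation (pairwise near-independence of $\{\sigma_y>t\}$ and $\{\sigma_z>t\}$ for $y,z$ far apart, using the decoupling of the environment) shows the number of late points is concentrated, hence positive with high probability. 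For $d=2$ the same scheme runs but the relevant tail exponent carries the extra $\log n$ from the two-dimensional Green's function, producing the $n^2(\log n)^2/\mu$ threshold; for $d=1$ the lower bound $t_{\cov}\ge t_{\hit}\ge C_1 n^2/\mu$ is immediate from \Cref{th:hittingupper}.

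I expect the main obstacle to be the hitting-time tail estimates needed for the $d\ge 2$ lower bounds, specifically establishing exponential-type tails $\P_{\rho,\pi}(\sigma_y>t)\asymp e^{-ct\mu/t_{\hit}}$ with the correct constant in the exponent, and the accompanying pairwise decorrelation of the events $\{\sigma_y>t\}$. In the static SRW setting these follow from strong Markov/regeneration and Green's function computations, but here the walk $X$ alone is not Markov, so one must work with the full system $(X_t,\eta_t)$ and control the additional randomness of the environment; the natural device is to wait for successive "environment-refresh" times on a mesoscopic box around $y$ (each occurring after time $\Theta(\mu^{-1}\cdot(\text{box size})^2)$ or so) and to use that after such a refresh the environment near $y$ is close to $\pi_p$, making the increments of the excursion process approximately independent. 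Making this regeneration rigorous — choosing the mesoscopic scale so that the walk is unlikely to have escaped the box yet the environment inside has equilibrated, and summing the resulting geometric number of failed excursions — is the technical heart of the argument, and is where the subcriticality $p<p_c(d)$ (which keeps open clusters of bounded size and lets the walk be compared to lazy SRW) is used most heavily.
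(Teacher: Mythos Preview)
Your upper-bound plan matches the paper: Matthews' inequality $t_{\cov}\le t_{\hit}\sum_{k=1}^{n^d}1/k$ combined with \Cref{th:hittingupper} gives the correct answer for $d\ge 2$, and $d=1$ needs a separate back-and-forth argument (the paper does essentially what you sketch, via regeneration times). The $d=1$ lower bound is also the same.

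For the $d\ge 3$ lower bound there is a real gap. You write ``using the hitting-time lower bound $\E[\sigma_y]\ge C_1 n^d/\mu$ from \Cref{th:hittingupper}'', but that theorem only asserts $\max_{x,y}\E_{x,\pi_p}[\sigma_y]\ge C_1 n^d/\mu$; it says nothing about $\E[\sigma_y]$ for a \emph{generic} $y$. Your first-moment computation $\sum_y\P(\sigma_y>t)\ge n^d e^{-Ct\mu/n^d}$ needs the lower bound for every $y$ (or at least for a positive fraction of vertices), and your restart argument upgrading to exponential tails needs it to hold from the state you regenerate into. This uniform lower bound is exactly \Cref{th:mainhitting}, which is the technical core of the paper and requires its own proof (local transience of the regenerated walk, control of the range between regeneration times, etc.). Once one has \Cref{th:mainhitting}, the paper sidesteps the second-moment decorrelation entirely by using Matthews' \emph{lower} bound on a well-separated set $A$ (vertices with coordinates in $\lfloor\sqrt n\rfloor\Z$), arguing that between successive hits of distinct points of $A$ there is, with high probability, a regeneration time at which the environment is $\pi_p^x$ and \Cref{th:mainhitting} applies. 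Your late-points route could in principle be made to work after proving \Cref{th:mainhitting}, but you have not identified that missing ingredient, and the second-moment step you flag as the main obstacle is genuinely avoided by the paper's approach.

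For $d=2$ the claim that ``the same scheme runs'' is too optimistic: the walk is recurrent, so there is no uniform hitting-time lower bound of the form $\E_x[\sigma_y]\gtrsim n^2\log n/\mu$ for \emph{all} $x\neq y$ (nearby $y$ are hit quickly), and the late-points heuristic with a single exponential rate breaks down. The paper takes a completely different route: it proves a strong (KMT-type) approximation of $(X_{\ttau_k})_k$ by planar Brownian motion, with error $O(n^\gamma)$ uniformly over $k\le Cn^2(\log n)^2$, and then imports the Dembo--Peres--Rosen--Zeitouni result that the $\varepsilon$-cover time of Brownian motion on the unit torus is $\sim \tfrac{2}{\pi}(\log \varepsilon)^2$. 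This directly yields an unvisited disc at time $cn^2(\log n)^2/\mu$ without any second-moment computation.
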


In order to prove the lower bound on the cover time in dimensions $d\geq3$ in Theorem \ref{th:maincover}, a key step is to obtain a lower bound on the expected hitting time of any vertex when the initial environment is very close to a stationary one as we now define. For $x\in\Z_n^d$, define the probability measure $\pi^x_p$ on~$\{0,1\}^{\Z_n^d}$ to be the measure $\pi_p$ conditioned on $\eta(e)=0$ for all $e$ incident to $x$, i.e.\ $\pi^x_p(\eta(e)=0)=1$ if $e$ is incident to $x$ and $\pi^x_p(\eta(e)=0)=1-p=1-\pi_p^x(\eta(e)=1)$ otherwise. 
The measure $\pi_p^x$ was also used in~\cite{Peres2015Random} and it will play an important role throughout the entire proof of the cover time lower bound in dimensions $d\geq 3$.

For $y\in\Z_n^d$, let $\sigma_y$ be the first time that the walk visits $y$:
\begin{equation}
    \sigma_y:=\inf\{t\geq0:X_t=y\}.
\end{equation}
In the following theorem we obtain a lower bound on the expected hitting time of any vertex which is of the same order as $t_{\hit}$ (from Theorem~\ref{th:mainhitting}) when the initial environment is $\pi_p^x$ for some $x$.

\begin{theorem}[Hitting time lower bound]\label{th:mainhitting}
For all $d\geq 3$ and $p\in(0,p_c(d))$, there exists a constant $C=C(d,p)>0$ such that for all $n\in\N$, $\mu\leq1$ and $x,y\in\Z_n^d$ with $x\neq y$,
\begin{equation}
    \E_{x,\pi^x_{p}}[\sigma_y]\geq C\frac{n^d}{\mu}.
\end{equation}
\end{theorem}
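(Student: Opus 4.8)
The plan is to show that, starting from $x$ with the environment $\pi_p^x$ (all edges at $x$ closed, everything else stationary), the walk needs time of order $n^d/\mu$ just to ``escape'' the local neighbourhood of its starting point enough times to reach $y$, because in dimensions $d\geq 3$ the walk is transient on the static open cluster and spends a constant fraction of time trapped at vertices all of whose incident edges happen to be closed. The key mechanism is that at any vertex $z$, the probability under $\pi_p$ that all $2d$ incident edges are closed is $(1-p)^{2d}=:q>0$, and when this happens the walk must wait an exponential time of order $1/\mu$ for an edge refresh before it can move at all. I would first make this precise by comparing the walk to the simple random walk on $\Z_n^d$ run at a slowed-down clock. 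More concretely, define for the walk a sequence of ``effective steps'': the times at which $X$ actually changes position. I want to argue that the expected gap between consecutive jumps, started from a typical environment, is bounded below by a constant times $1/\mu$, while the number of jumps needed to go from $x$ to $y$ has the usual order for SRW on $\Z_n^d$ ($n^2$ for $d=1,2$, $n^d$ for $d\geq 3$), and then combine these via a Wald-type or renewal argument.

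The cleanest route, I think, is to exploit reversibility and a commute-time / resistance identity for the full chain $M_t=(X_t,\eta_t)$. The full system is a reversible continuous-time Markov chain with stationary distribution $u\times\pi_p$; by the standard formula, for any states $a=(x,\eta)$ and $b=(y,\eta')$ one has $\E_\pi[\sigma_{(x,\cdot)}]$-type bounds controlled by the stationary probability of the target set. In particular, for a single target vertex $y$ (summing over environments), $\pi(\{y\}\times\{0,1\}^{E}) = 1/n^d$, and the general lower bound $\E_{\mathrm{stat}}[\sigma_A]\geq (1-\pi(A))/(\text{escape rate from }A)$ — or more robustly, the bound $t_{\hit}^{\mathrm{full}}\geq c/(\pi(A)\,\cdot\,(\text{conductance into }A))$ — should give $n^d/\mu$ once one checks that the total conductance across the boundary of $\{y\}\times\{0,1\}^{E}$ is of order $p\cdot n^{-d}$ (there are $2d$ edges, each carrying conductance of order $n^{-d}$, but each is open only with probability $p$, and crucially the jump happens at rate $1/(2d)$) and that getting the environment ``unstuck'' near $y$ costs an extra factor $1/\mu$. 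The role of the specific initial measure $\pi_p^x$ is mainly to ensure we are genuinely starting from (essentially) stationarity so that these stationary-chain estimates apply without a burn-in correction; alternatively one shows $\pi_p^x$ and $\pi_p$ are close enough in the relevant sense that the hitting-time expectation changes by at most a constant factor.

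Concretely I would carry out the following steps. \textbf{Step 1:} Reduce to a statement about the reversible chain $M$: write $\E_{x,\pi_p^x}[\sigma_y]$ in terms of $\E_{\,\cdot\,}[\sigma_{\{y\}\times\{0,1\}^E}]$ for $M$, and use that $\pi_p^x$ differs from $\pi_p$ only on the $2d$ edges at $x$, so (by a coupling that refreshes those edges, which happens in time $O(1/\mu)$ — negligible against $n^d/\mu$) it suffices to lower bound $\E_{x,\pi_p}[\sigma_y]$, and then by averaging, to lower bound $\E_{u,\pi_p}[\sigma_y]$ up to the mixing time, which by Theorem~\ref{theorem:mixingupperbound} is $O(n^2/\mu)=o(n^d/\mu)$ for $d\geq 3$. \textbf{Step 2:} For the stationary chain, apply the Matthews-type / flow lower bound: $\E_{\mathrm{stat}}[\sigma_B] \geq \frac{1-\pi(B)}{Q(B,B^c)}$ where $B=\{y\}\times\{0,1\}^E$ and $Q(B,B^c)$ is the ergodic flow out of $B$; here $\pi(B)=n^{-d}$ and $Q(B,B^c)$ is, up to constants, $n^{-d}\cdot p \cdot \frac{1}{2d}$ — wait, that only gives $n^2/\mu$-free order $1/p$, not the $1/\mu$. \textbf{Step 3 (the real content):} So instead I must use that to even have a chance of being absorbed, the environment near $y$ must have an open edge pointing in, and this conditioning, combined with the walk needing to actually be adjacent to $y$ at that moment, forces a factor $1/\mu$: formally, decompose $\sigma_y$ into the successive ``visits to the neighbourhood of $y$ with a favourable environment'' and show each such visit occurs only after a further $\Theta(1/\mu)$ of time in expectation, via a renewal/excursion decomposition of the environment dynamics at the $2d$ edges incident to $y$ (these refresh at rate $\mu$, independently of the walk until $\sigma_y$).

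\textbf{Main obstacle.} The hard part will be Step 3: quantitatively coupling the slow ($\mu$-rate) environment clock at $y$'s edges with the fast (rate $1$) walk clock to show the extra $1/\mu$ factor, while simultaneously getting the correct $n^d$ factor from transience in $d\geq 3$ — i.e.\ that the walk, on the timescale where the environment has refreshed $O(1)$ times, has only explored $O(1)$ candidate sites and in particular has probability $O(n^{-d})$ of being at $y$. One must be careful that these two effects genuinely multiply rather than one dominating; I expect this requires a careful two-scale argument, perhaps first conditioning on the environment trajectory at $y$'s edges, then using a quenched hitting estimate for the walk in the (mostly static on the relevant scale) environment elsewhere, of the type already developed in \cite{Peres2015Random} for the upper bound in Theorem~\ref{th:hittingupper}. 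A clean alternative would be to run the argument entirely through the resistance metric of the full chain $M$ and the Chandra et al.\ commute-time identity, estimating the effective resistance between $(x,\eta)$ and the set $B$; transience of $\Z^d$, $d\geq 3$, together with the ``bottleneck'' at $y$ caused by the rare all-closed configuration, should combine to give effective resistance $\gtrsim n^d/\mu$ — but making the resistance estimate rigorous for this particular (large, highly non-product) state space is itself the technical crux.
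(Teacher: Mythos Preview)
Your proposal is exploratory rather than a proof, and the central mechanism you settle on in Step~3 is not the right one. The conductance computation in Step~2 is correct and, as you note, gives only $Q(B,B^c)\asymp p\,n^{-d}$, hence a lower bound of order $n^d$ with no $1/\mu$. Your proposed fix --- focusing on the refresh clock of the $2d$ edges incident to the target $y$ --- is a red herring: conditional on the walk being adjacent to $y$, the relevant edge is open with probability $p$ independently of $\mu$, so this mechanism contributes a factor $1/p$, not $1/\mu$. The $1/\mu$ does not come from the environment at $y$; it comes from the \emph{walk's} slow exploration rate. In time $t$ the walk visits only order $\mu t$ distinct vertices, because in each stretch of length $\asymp 1/\mu$ it is confined to a single subcritical cluster of bounded expected size. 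Your first paragraph gestures at this (``expected gap between consecutive jumps $\gtrsim 1/\mu$''), but that statement is false as written: the walk makes many rapid jumps within a cluster and only occasionally gets stuck, so jumps and effective displacements must be decoupled.

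The paper's argument is structured quite differently and avoids conductance or resistance entirely. It works with the regeneration times $(\ttau_k)$ of Section~\ref{sec:preliminaries}, for which $(X_{\ttau_k})_k$ is a genuine symmetric random walk with bounded-variance increments and $\ttau_{k+1}-\ttau_k\asymp 1/\mu$. The proof then splits $[0,Cn^d/\mu]$ at $2t_{\mix}$. For $[0,2t_{\mix}]$ (Lemmas~\ref{lemma:hittingboundary} and~\ref{lemma:hityuntilmixing}) one uses the local CLT for $(X_{\ttau_k})_k$, Doob's maximal inequality, and the exponential tails of $|\cR[\ttau_k,\ttau_{k+1}]|$ to show that with probability bounded away from~$0$ the walk escapes to distance $n/2$ and then stays at distance $\geq(\log n)^2$ from $y$ --- this is exactly where transience in $d\geq 3$ enters. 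For $[2t_{\mix},Cn^d/\mu]$ (Lemma~\ref{lemma:hityaftermixing}) one passes to the stationary chain and bounds $\E_{u,\pi_p}[|\cR[0,t]|]$ by cutting time into blocks of length $\beta/\mu$ and dominating the range in each block by a subcritical cluster size; this gives $\E[|\cR[0,t]|]\lesssim \mu t$ and hence $\P(y\in\cR)\lesssim \mu t/n^d$. Combining, $\P_{x,\pi_p^x}(\sigma_y\geq Cn^d/\mu)\geq c>0$. The range bound in Lemma~\ref{lemma:hityaftermixing} is precisely the quantitative form of the ``$1/\mu$ slowdown'' that your proposal is missing, and the regeneration-time transience argument replaces any need for resistance estimates on the full chain.
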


An immediate corollary of the above is a lower bound on the expected hitting time of any vertex when the initial environment is stationary. 
\begin{corollary}\label{cor:mainhitting}
For all $d\geq 3$ and $p\in(0,p_c(d))$, there exists a constant $C=C(d,p)>0$ such that for all $n\in\N$, $\mu\leq1$ and $x,y\in\Z_n^d$ with $x\neq y$,
\begin{equation}
    \E_{x,\pi_p}[\sigma_y]\geq C\frac{n^d}{\mu}.
\end{equation}
\end{corollary}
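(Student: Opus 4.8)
\emph{Proof proposal.} The plan is to deduce the corollary from Theorem~\ref{th:mainhitting} by a one-step conditioning argument, exploiting that $\pi_p^x$ is nothing but $\pi_p$ conditioned on an event of positive probability.

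First, fix $x\neq y$ and let $A$ be the event that every one of the $2d$ edges incident to $x$ is closed in the initial environment $\eta_0$. Since $\eta_0\sim\pi_p$ is a product Bernoulli($p$) measure on $E(\Z_n^d)$, we have $\P_{x,\pi_p}(A)=(1-p)^{2d}$, a constant depending only on $d$ and $p$; moreover the conditional law of $\eta_0$ given $A$ is exactly $\pi_p^x$, since conditioning a product measure on the values of finitely many coordinates leaves a product measure, and the resulting one-dimensional marginals match the definition of $\pi_p^x$. The initial walk position is deterministically $x$ under both $\P_{x,\pi_p}$ and $\P_{x,\pi_p^x}$, and the conditioning on $A$ does not involve it.

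Second, since $\sigma_y\geq0$ and $A$ is measurable with respect to $\eta_0$, restricting to $A$ can only decrease the (nonnegative) expectation, and applying the Markov property of the full system $(M_t)_{t\geq0}$ at time $0$ gives
\begin{equation*}
    \E_{x,\pi_p}[\sigma_y]\;\geq\;\E_{x,\pi_p}[\sigma_y\,\mathbf{1}_A]\;=\;\P_{x,\pi_p}(A)\,\E_{x,\pi_p}[\sigma_y\mid A]\;=\;(1-p)^{2d}\,\E_{x,\pi_p^x}[\sigma_y].
\end{equation*}
Feeding the bound of Theorem~\ref{th:mainhitting} into the right-hand side yields $\E_{x,\pi_p}[\sigma_y]\geq(1-p)^{2d}C\,n^d/\mu$, so the claim holds with the constant $(1-p)^{2d}C$.

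There is essentially no obstacle: the only points requiring a moment's care are the identification of $\pi_p(\cdot\mid A)$ with $\pi_p^x$ (immediate from the product structure of $\pi_p$) and the elementary inequality $\E[\sigma_y]\geq\E[\sigma_y\mathbf{1}_A]$. An alternative, slightly more wasteful route would be to run the walk from $(x,\eta_0)$ for a short initial window and condition on all $2d$ edges at $x$ refreshing to the closed state during that window; this again costs only a constant factor, but the direct conditioning above is cleaner.
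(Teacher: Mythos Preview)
Your proof is correct and is exactly the argument the paper has in mind: the corollary is stated as ``immediate'' from Theorem~\ref{th:mainhitting}, and the one-line conditioning $\E_{x,\pi_p}[\sigma_y]\geq(1-p)^{2d}\E_{x,\pi_p^x}[\sigma_y]$ via the event that all edges at $x$ are closed is precisely how one makes this immediate.
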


\subsection{Overview of the proof and outline}
A key tool in the proof for dimensions $d\geq3$ are lower and upper bounds for the cover time in terms of hitting times due to Matthews \cite{Matthews1988Covering}, which we recall here. For a set of vertices $A\subset\Z_n^d$, define
\begin{equation}
    t^{\SRW}_{\hit}:=\max_{\substack{x,y\in \Z_n^d}}\E^{\SRW}_x[\sigma_y],\qquad t_{A}^{\SRW}:=\min_{\substack{x,y\in A\\x\neq y}}\E^{\SRW}_x[\sigma_y].
\end{equation}
Then Matthews' bound states
\begin{equation}\label{eq:matthewsSRW}
    t_A^{\SRW}\left(1+\ldots+\frac{1}{|A|-1}\right)\leq t_{\cov}^{\SRW}\leq t_{\hit}^{\SRW}\left(1+\ldots+\frac{1}{n^d}\right).
\end{equation}
Here $\E_x^{\SRW}$ and $t_{\cov}^{\SRW}$ are the expectation and cover time for the simple random walk. It is straightforward to adapt the upper bound to the setting of dynamical percolation, since we can derive a hitting time upper bound for all vertices and all starting environments from Theorem \ref{th:hittingupper} for $d\geq2$. This is shown in Section \ref{sec:upperbound}.

The lower bounds are much more challenging. In Section \ref{sec:preliminaries}, we recall the sequence of \emph{regeneration times} $(\ttau_k)_{k\in\N}$ defined in \cite[Section 6]{Peres2015Random}, which will play a crucial role throughout the proof. The regeneration times are constructed such that at time $\ttau_k$, the environment has distribution $\pi_p^x$ conditional on $X_{\ttau_k}=x$, and $(X_{\ttau_k})_{k\in\N}$ is a symmetric random walk. Furthermore, the number of vertices visited in between regeneration times has exponential tails.

We then prove Theorem \ref{th:mainhitting} in Section \ref{sec:hittingtime}. Using that the walk $(X_{\ttau_k})_{k\in\N}$ is \emph{locally transient}, we show that with probability bounded away from 0, the hitting time of any vertex $y\in\Z_n^d$ is larger than the mixing time of $X$. After mixing, it takes time of order $n^d/\mu$ to hit $y$, since $X$ only visits a bounded number of vertices in $\frac{1}{\mu}$ steps. Since $(X_{\ttau_k})_{k\in\N}$ is a symmetric random walk, we can use standard results to control its behaviour. One of the main technical difficulties is controlling what happens in between regeneration times. In particular, we want to lower bound the probability that~$X$ does not hit $y$ in between regeneration times. 

In Section \ref{sec:matthews}, we transfer the hitting time lower bound from Theorem \ref{th:mainhitting} to a cover time lower bound using an adaptation of Matthews' method. In contrast to the upper bound, the adaptation is not straightforward, because bounding by the minimum hitting time over all environments will not yield a sharp bound. Instead, we want to lower bound the cover time in terms of the hitting time started from the environment $\pi_p^x$. We overcome this problem by choosing the set $A$ from \eqref{eq:matthewsSRW} to be the set of vertices whose coordinates are multiples of $\lfloor\sqrt{n}\rfloor$, so the vertices in $A$ are a distance of at least $\sqrt{n}$ apart. Since the maximum number of vertices visited in between times $\ttau_j$ and $\ttau_{j+1}$ is at most of order $\log n$, there will be a time $\ttau_j$ in between consecutive visits to distinct vertices of~$A$ with very high probability. At that time, the environment has distribution $\pi_{p}^x$, and we can apply the hitting time lower bound of Theorem \ref{th:mainhitting}.

For $d=2$, the random walk is not locally transient, so there is no hitting time lower bound that is uniform over all vertices like in Theorem \ref{th:mainhitting}. Instead, we couple the walk along regeneration times to Brownian motion using a refined \emph{strong approximation} theorem. Again, we have to control what happens in between regeneration times. We prove this in Section \ref{sec:d2}.

Finally, in Section 6, we treat the one-dimensional case.

\section{Upper bound}\label{sec:upperbound}
In this section, we prove the upper bounds of Theorem \ref{th:maincover} for dimensions $d\geq2$ using an adaptation of Matthews' upper bound on cover times for Markov chains. The adaptation of the proof to the setting of dynamical percolation is straightforward, but we include it here for the reader's convenience. 

\begin{lemma}[Matthews' upper bound]\label{lemma:matthewsupper}
For all $d\geq 1$, $p\in(0,1)$, $n\in\N$ and $\mu>0$, random walk on dynamical percolation on $\Z_n^d$ with parameters $\mu$ and $p$ satisfies
\begin{equation}
    t_{\cov}\leq t_{\hit}\left(1+\ldots+\frac{1}{n^d}\right).
\end{equation}
\end{lemma}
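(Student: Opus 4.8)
The plan is to mimic the classical argument of Matthews for Markov chains, which only uses the strong Markov property at hitting times — a property the full system $(M_t)_{t\geq 0}$ enjoys even though $(X_t)_{t\geq 0}$ alone does not. Fix a starting vertex $x_0$ and environment $\eta_0$ realizing (up to an arbitrarily small additive error, or exactly) the maximum in the definition of $t_{\cov}$. Enumerate the vertices of $\Z_n^d$ in a uniformly random order $y_1,\dots,y_N$ with $N=n^d$, independent of everything else, and define $T_k$ to be the first time all of $y_1,\dots,y_k$ have been visited by $X$. Then $T_N \ge \tau_{\cov}$ pathwise, and in fact $T_N$ has the same law as $\tau_{\cov}$ since the order is just a relabelling; so $\E_{x_0,\eta_0}[\tau_{\cov}] = \E_{x_0,\eta_0}[T_N]$, and it suffices to bound $\E[T_k - T_{k-1}]$ for each $k$, summed over $k$.

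First I would observe the telescoping identity $T_N = \sum_{k=1}^N (T_k - T_{k-1})$ with $T_0 := 0$, and note that $T_k - T_{k-1} > 0$ only when $y_k$ is the last of $y_1,\dots,y_k$ to be visited, an event of probability $1/k$ by exchangeability of the random order (and independence of the order from the trajectory of $M$). On that event, at time $T_{k-1}$ the walk sits at some vertex $z = X_{T_{k-1}}$ with some environment, and $T_k - T_{k-1}$ is exactly the hitting time of $y_k$ started from the state $M_{T_{k-1}}$; since $T_{k-1}$ is a stopping time for $(M_t)$, the strong Markov property gives $\E[\,T_k - T_{k-1}\mid \{y_k \text{ last}\},\, M_{T_{k-1}}\,] = \E_{M_{T_{k-1}}}[\sigma_{y_k}] \le t_{\hit}$ by the definition of $t_{\hit}$ as a maximum over all starting vertices and environments. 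Hence $\E[T_k - T_{k-1}] \le t_{\hit}/k$, and summing yields $\E_{x_0,\eta_0}[\tau_{\cov}] \le t_{\hit}\sum_{k=1}^N 1/k = t_{\hit}(1 + \tfrac12 + \dots + \tfrac1{n^d})$. Taking the maximum over $x_0,\eta_0$ gives the claim.

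The one point requiring a little care — and the only genuine obstacle — is justifying the conditioning step cleanly: one must set up the probability space so that the random ordering of the vertices is independent of the process $(M_t)_{t\ge0}$, verify that each $T_k$ is a stopping time with respect to the (suitably augmented) filtration generated by $M$ together with the ordering, and check that on the event $\{y_k\text{ is visited last among }y_1,\dots,y_k\}$ the identity $T_k = T_{k-1} + \sigma_{y_k}\circ\theta_{T_{k-1}}$ holds and that the event and the post-$T_{k-1}$ evolution interact only through $M_{T_{k-1}}$. Since $t_{\hit}$ is defined as a maximum over \emph{all} initial environments $\eta_0$, not just stationary ones, the bound $\E_{M_{T_{k-1}}}[\sigma_{y_k}]\le t_{\hit}$ is immediate regardless of which environment the walk finds itself in at time $T_{k-1}$, so no measurability subtlety about the conditional law of the environment arises. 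The combinatorial heart is just the exchangeability argument that turns "being visited last" into probability $1/k$, exactly as in Matthews' original proof; the dynamical-percolation aspect enters only through the harmless observation that $(M_t)$ is a (time-homogeneous, strong) Markov process on the product state space $\Z_n^d\times\{0,1\}^{E(\Z_n^d)}$.
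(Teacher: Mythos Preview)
Your argument is correct and is essentially the same as the paper's: both use a uniformly random ordering of the vertices, define $T_k$ as the first time the first $k$ vertices in the ordering have all been visited, use exchangeability to get $\P(y_k\text{ last})=1/k$, and bound the conditional increment via the strong Markov property of the full system $(M_t)$. One small caveat: in this paper $t_{\hit}$ is actually defined as $\max_{x,y}\E_{x,\pi_p}[\sigma_y]$ (stationary initial environment), not as a maximum over all $\eta_0$ as you assert; the paper's proof makes the same implicit leap and then addresses it separately by noting that waiting for all edges to refresh costs only $O(\mu^{-1}\log n)\ll t_{\hit}$.
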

\begin{proof}
This proof follows the exposition of the proof of Matthews' bound from~\cite[Theorem 11.2]{levin2017markov}.
We label the vertices of $\Z_n^d$ as $\{1,\ldots,n^d\}$, and let $\phi$ be a uniformly random permutation of $\{1,\ldots,n^d\}$ that is independent of $((X_t,\eta_t))_{t\geq0}$. Set $T_0:=0$ and define $T_k$ to be the first time that the random walk $X$ visits the set of vertices $\{\phi(1),\ldots,\phi(k)\}$. We let $L_k=X_{T_k}$. Note that $L_k\neq\phi(k)$ if and only if the vertex $\phi(k)$ was already visited by $X$ by time $T_{k-1}$, in which case $T_{k}=T_{k-1}$. By symmetry and independence of $\phi$ from $X$, we have $\P(L_k=\phi(k))=\frac{1}{k}$ and
\begin{equation}
    \E_{x,\eta_0}[T_k-T_{k-1}|L_k=\phi(k)]\leq\max_{x',\eta'_0}\E_{x',\eta_0'}[\E_{x',\eta'_0}[\sigma_{\phi(k)}]|\phi]\leq t_{\hit}.
\end{equation}
Hence,
\begin{equation}
    \begin{split}
        t_{\cov}\leq&\sum_{k=1}^{n^d}\max_{x,\eta_0}\left\{\E_{x,\eta_0}[T_k-T_{k-1}|L_k=\sigma(k)]\P(L_k=\phi(k))\right.\\
        &\left.+\E_{x,\eta_0}[T_k-T_{k-1}|L_k\neq\phi(k)]\P(L_k\neq\phi(k))\right\}\leq t_{\hit}\sum_{k=1}^{n^d}\frac{1}{k}
    \end{split}
\end{equation}
and this concludes the proof.
\end{proof}

\begin{proof}[Proof of upper bounds in Theorem \ref{th:maincover} for $d\geq2$]
    This follows immediately from Theorem \ref{th:hittingupper} and Lemma \ref{lemma:matthewsupper}. Note that Theorem \ref{th:hittingupper} was only stated for stationary starting environments, but the result carries over to arbitrary starting environments since the time until every edge updates is with high probability of order $\log n^d/\mu$ which is much smaller than $ t_{\hit}$.
\end{proof}

\section{Regeneration times}\label{sec:preliminaries}
In this section we recall the definition of regeneration times introduced by Peres, Stauffer and Steif in~\cite[Section 6]{Peres2015Random}. Observing the walk along the regeneration times will play an important role in the proofs. Our goal is to define a sequence of \emph{regeneration times} $(\ttau_{k})_{k\in\N_0}$ such that, if we condition on $X_{\ttau_k}=x$ for some vertex $x\in\Z_n^d$, then $\eta_{\ttau_k}\sim\pi_{p}^x$ and $(X_{\ttau_k})_{k\in\N_0}$ is a symmetric Markovian random walk. Furthermore, $(\ttau_{k+1}-\ttau_k)_k$ will be an i.i.d.\ sequence with expectation of order $\frac{1}{\mu}$. Using these properties, we are able to transfer classical results on random walks on $\Z_n^d$ to bounds on hitting and cover times of the random walk on dynamical percolation at the cost of a factor $\frac{1}{\mu}$.

We first define the random process $(A_t)_{t\geq0}$ on $\E(\Z_n^d)$. The set $A_t$ should be thought of as the set of edges that the random walk $X$ has information on at time $t$. Let $A_0$ be the set of edges incident to $X_0$. If at time $t$, $X$ jumps to a vertex $v$, then all edges incident to $v$ are added to $A_{t^-}$. If at time~$t$, an edge $e\in A_{t^-}$ refreshes its state and $e$ is not incident to $X_t$, then $e$ is removed from~$A_{t^-}$. Let~$(\cF^{\star}_t)_{t\geq0}$ be the natural filtration associated with $(X_t)_{t\geq0}$, $(A_t)_{t\geq0}$ and $((\eta_t)|_{A_t})_{t\geq0}$, where~$(\eta_t)|_{A_t}$ is the restriction of $\eta_t$ to $A_t$. So this filtration only contains knowledge of the walk's position, whether an edge is in~$A_t$ and the state of the edges in $A_t$. The following result from~\cite[Proposition 6.11]{Peres2015Random} states that the sets $A_t$ tend to decrease in size on a time scale of order $\frac{1}{\mu}$.

\begin{lemma}\label{lemma:decayAfromPSS}
There exist constants $C=C(d,p)>0$ and $C_A=C_A(d,p)>0$ such that for all $n\in\N$, $\mu\leq1$, $x\in\Z_n^d$ and $t\geq0$,
\begin{equation}
    \E_{x,\pi_{p}^x}\left[|A_{t+\frac{C_A}{\mu}}|\mid\cF_t^{\star}\right]\leq\frac{|A_t|}{4}+C\log|A_t|.
\end{equation}
\end{lemma}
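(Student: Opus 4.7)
The plan is to fix a time horizon $T=C_A/\mu$ and show that, over $[t,t+T]$, nearly every edge of $A_t$ refreshes and is thus removed, while the walker adds only few new edges because, being subcritical, it stays in a small region. Let $R$ denote the set of vertices visited by $X$ in $[t,t+T]$ and let $B$ denote the set of edges incident to some vertex of $R$; any edge added to $A$ during this interval must lie in $B$, so at most $2d|R|$ new edges appear.

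For an old edge $e\in A_t$ with no endpoint in $R$, the walker is never at an endpoint of $e$ during $[t,t+T]$, so if $e$ refreshes at any time in this interval it is immediately removed from $A$. The refresh clock of $e$ is an independent Poisson process of rate $\mu$ which, being the underlying clock of an edge whose current state is already known (since $e\in A_t$), is independent of $\cF^{\star}_t$, hence
\begin{equation*}
\P(e\in A_{t+T}\mid\cF^{\star}_t,R)\leq e^{-\mu T}=e^{-C_A}.
\end{equation*}
Choosing $C_A$ so that $e^{-C_A}\leq 1/8$ then handles all far-from-walker edges. Old edges $e\in A_t$ with an endpoint in $R$ could conceivably be kept alive by the walker being present at a refresh time, so I would bound their total number crudely by $|A_t\cap B|\leq 2d|R|$.

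The main technical input is therefore a uniform bound on $|R|$. Since $p<p_c(d)$, the environment on edges outside $A_t$ is still $\pi_p$-distributed conditional on $\cF^{\star}_t$ (by the product structure of dynamical percolation), and a standard subcritical cluster-exploration argument against the walk's jumps and the extra refreshes in $A_t$ during $[t,t+T]$ yields a tail bound of the form $\P(|R|\geq k\mid\cF^{\star}_t)\leq Ce^{-ck}$ for all $k\geq 1$, uniformly in the conditioning and with constants depending only on $d$ and $p$. Applied with $k\asymp\log|A_t|$ this produces the $C\log|A_t|$ term in the statement: on the high-probability event $\{|R|\leq C'\log|A_t|\}$ one has $|B|\leq 2dC'\log|A_t|$ deterministically, while on the complementary event, whose probability is at most $|A_t|^{-c'}$ for any chosen $c'$, the trivial bound $|A_{t+T}|\leq|A_t|+2d|R|$ together with the exponential tail on $|R|$ contributes only $O(1)$.

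Combining the pieces gives
\begin{equation*}
\E[|A_{t+T}|\mid\cF^{\star}_t]\leq e^{-C_A}|A_t|+2dC'\log|A_t|+O(1)\leq \frac{|A_t|}{4}+C\log|A_t|,
\end{equation*}
as required. I expect the principal obstacle to be the third paragraph: obtaining the exponential tail on $|R|$ \emph{uniformly in} $\cF^{\star}_t$. One has to argue that even after conditioning on the walker's past and on the states of the (possibly many) edges in $A_t$, the unexplored edges remain $\pi_p$-distributed, and then couple the exploration that defines $R$ to a subcritical percolation cluster plus a Poisson number of ``fresh'' random refreshes of edges in $A_t$, so that standard subcritical estimates can be applied.
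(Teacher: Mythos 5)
The paper gives no proof of this lemma: it is imported verbatim as \cite[Proposition 6.11]{Peres2015Random}, so there is no in-paper argument to compare yours against. I'll therefore comment on the proposal on its own terms.

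Your opening decomposition is the natural one and the first half works: given $\cF_t^\star$, the post-$t$ refresh clocks of all edges are still independent Poisson of rate $\mu$, so each $e\in A_t$ with no endpoint in $R$ fails to refresh during the window with probability exactly $e^{-C_A}$, giving the $e^{-C_A}|A_t|$ contribution. The problem is the third paragraph, and it is more than a technical wrinkle. The claimed bound $\P\bigl(|R|\geq k\mid\cF_t^\star\bigr)\leq Ce^{-ck}$, \emph{uniformly} over $\cF_t^\star$, is false. The $\sigma$-algebra $\cF_t^\star$ reveals $\eta_t|_{A_t}$, so it can condition on $X_t$ sitting inside a large \emph{open} component of $A_t$; such histories occur with small but positive probability under $\P_{x,\pi_p^x}$ (for instance, the walk runs along a freshly opened path of length $L$ in time $\ll 1/\mu$, so that all those edges are still in $A_t$ and still open). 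On such an event the walk makes order $1/\mu$ jump attempts inside the known open cluster before refreshes appreciably cut it up, and its range over $[t,t+C_A/\mu]$ is polynomially large in $1/\mu$ (roughly $\min\bigl(L,\mu^{-1/3}\bigr)$, not $O(1)$ and not $O(\log|A_t|)$). Your proposed coupling --- subcritical cluster through unexplored edges plus a Poisson number of fresh refreshes of $A_t$-edges --- never sees these currently-open edges of $A_t$, which cannot be treated as subcritical after conditioning, so the exponential tail you invoke does not follow.

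This also shows that the crude inequality $|A_{t+T}|\leq|\{e\in A_t:\text{no refresh}\}|+2d|R|$, on which the rest of your computation rests, is too lossy to yield the lemma in general: in the scenario above $2d\,\E[|R|\mid\cF_t^\star]$ far exceeds $|A_t|/4+C\log|A_t|$. What saves the statement is that edges picked up early in the window while the walk traverses the old open cluster will themselves mostly refresh and be removed again before time $t+T$; one has to count something closer to the set of vertices the walk is near \emph{after} each edge's last refresh, rather than the full range $R$. That requires a genuinely different accounting, and is the content of \cite[Proposition 6.11]{Peres2015Random} rather than a routine gap-filling exercise.
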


We first define an increasing sequence of stopping times $(\tau_k)_{k\in\N_0}$ as follows. Let $\tau_0=0$ and for $k\geq1$, define
\begin{equation}
    \tau_k=\inf\{j>\tau_{k-1}:|A_{\frac{jC_A}{\mu}}|=2d\text{ and }\eta_{\frac{jC_A}{\mu}}(e)=0\text{ for all } e\in A_{\frac{jC_A}{\mu}}\},
\end{equation}
where $C_A$ is as in Lemma \ref{lemma:decayAfromPSS}. Define
\begin{equation}\label{eq:deftautilde}
    \ttau_k:=\frac{C_A}{\mu}\tau_k.
\end{equation}
So at each time $\ttau_k$, $A_{\ttau_k}$ only consists of the edges incident to $X_{\ttau_k}$, which at that time are all closed. The rest of the edges have Bernoulli distribution. Thus, conditioning on $X_{\ttau_k}=x$ for some $x\in\Z_n^d$, we have $\eta_{\ttau_k}\sim\pi^x_{p}$. Furthermore, starting the process according to~$\delta_x\times\pi^x_{p}$, the increments $(\ttau_{k}-\ttau_{k-1})$ are i.i.d.\ and the process $(X_{\ttau_k})_{k\in\N_0}$ is clearly a Markovian symmetric random walk on $\Z_n^d$ with uncorrelated coordinates. Also,~$X$ satisfies the strong Markov property at the stopping times $\ttau_k$.

We now state two results \cite[Theorem 6.18 and Lemma 7.4]{Peres2015Random} that will be useful in the proofs. The first says that $\tau_1$ is uniformly bounded, which implies that for each $k$ the difference $\ttau_{k}-\ttau_{k-1}$ is typically of order $\frac{1}{\mu}$. The second says that the number of vertices visited between times $\ttau_{k}$ and~$\ttau_{k+1}$ has exponential tails.
\begin{lemma}\label{lemma:taubounded}
For all $d\geq1$ and $p\in(0,p_c(d))$, there exists a constant $C=C(d,p)>0$ such that for all $n\in\N$, $\mu\leq1$ and $x\in\Z_n^d$,
\begin{equation}
    \E_{x,\pi_{p}^x}[\tau_1]\leq C.
\end{equation}
\end{lemma}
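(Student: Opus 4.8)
The plan is to show that at each ``try'' time $\frac{kC_A}{\mu}$ there is a probability bounded below by a constant $c=c(d,p)$ that we have a regeneration, and that these successes can be made sufficiently independent (or dominated by a geometric random variable) so that $\tau_1$ has a uniformly bounded expectation. The key quantitative input is Lemma \ref{lemma:decayAfromPSS}: starting from $\cF_t^\star$ with $|A_t|=m$, one step of size $\frac{C_A}{\mu}$ contracts the conditional expectation of $|A|$ to at most $\frac{m}{4}+C\log m$. Iterating this drift inequality, after $O(\log m)$ steps the expected size of $A$ has dropped to $O(1)$; in particular, after a number of steps that does not depend on $n$ (only on $d,p$ and a crude a priori bound on $|A_0|$, which is at most $2d + (\text{number of distinct vertices visited})$), Markov's inequality gives that $|A| = 2d$ with probability bounded below.

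First I would make the drift argument precise. Write $a_j := \E_{x,\pi_p^x}[|A_{jC_A/\mu}| \mid \cF^\star_0]$ (or condition at a general time and use the Markov property). From Lemma \ref{lemma:decayAfromPSS} and Jensen applied to the concave function $\log$, $a_{j+1} \le \tfrac14 a_j + C\log a_j \le \tfrac13 a_j$ once $a_j$ exceeds some threshold $K=K(d,p)$, and once $a_j \le K$ a further bounded number of steps brings $a_j$ below, say, $2d + \tfrac12$. Since $|A_0|$ is bounded (initially $A_0$ has $2d$ edges, and each subsequent step adds at most $2d$ edges only when the walk jumps, so over the first $\ell$ time units $|A_\ell| \le 2d(1+N_\ell)$ where $N_\ell$ is the number of jumps, which has a Poisson-type tail uniformly in $n$), there is a deterministic $\ell_0 = \ell_0(d,p)$ with $a_{\ell_0} \le 2d + \tfrac12$ — but I must be careful: $|A_0|$ is not deterministically bounded, so I would instead run the argument conditionally on the (likely) event $\{|A_0| \le K_0\}$ for a large constant $K_0$, and handle the complementary event separately using that after $O(\log |A_0|)$ further steps we again reach size $2d$, and $\E[\log |A_0|]$ is bounded.

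Next, from $a_{\ell_0} \le 2d+\tfrac12$ and $|A_{\ell_0 C_A/\mu}| \ge 2d$ always (the edges incident to $X$ are never removed), Markov's inequality yields $\P(|A_{\ell_0 C_A/\mu}| = 2d) \ge \tfrac12$. Conditional on $|A| = 2d$, the $2d$ edges incident to the walker are each closed with probability at least $(1-p)^{2d}$ — more precisely, on the event that $A$ equals exactly the edge-set incident to $X$, the restriction $\eta|_A$ still has its Bernoulli law, so all of them are closed with probability $\ge (1-p)^{2d}$, independently of the rest. Combining, $\P(\text{regeneration at step } \ell_0) \ge c := \tfrac12 (1-p)^{2d}$. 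Finally, applying the Markov property at these try-times and iterating, $\tau_1$ is stochastically dominated by $\ell_0$ times a geometric random variable with success probability $c$, giving $\E_{x,\pi_p^x}[\tau_1] \le \ell_0/c =: C(d,p)$. One technical point to watch: the constant $c$ in the single-step success probability should not degrade with $n$, which is guaranteed because Lemma \ref{lemma:decayAfromPSS} is uniform in $n$ and the number of steps $\ell_0$ depends only on the contraction rate and the a priori bound $K_0$, not on $n$.

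The main obstacle is the a priori control of $|A_0|$ (equivalently, uniformly bounding how large $A$ can get before the first regeneration attempt): the drift lemma only tells us $A$ contracts, not that it starts small, so I would either (i) absorb a large initial value by paying $\E[\log|A_0|]$ extra steps — fine because $\log$ grows slowly and the number of distinct vertices visited in $O(1/\mu)$ time has exponential tails uniformly in $n$ (cf.\ the property recalled after this lemma) — or (ii) restart the clock: since after any time the set $A$ reaches size $2d$ within $O_{\P}(\log|A|)$ steps, a renewal argument still closes. Making the geometric-domination rigorous (the successes at distinct try-times are not literally independent, but conditionally on the past the success probability is always $\ge c$, which suffices for stochastic domination) is routine but should be stated carefully.
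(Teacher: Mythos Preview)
The paper does not prove this lemma; it is quoted from \cite[Theorem~6.18]{Peres2015Random}. Your sketch follows the natural architecture of that argument --- iterate the drift of Lemma~\ref{lemma:decayAfromPSS} to control $|A|$, then dominate $\tau_1$ by a geometric --- but the key step where you deduce $\P(|A_{\ell_0 C_A/\mu}| = 2d) \geq \tfrac12$ from Markov's inequality does not go through. Iterating the map $f(m) = \tfrac{m}{4} + C\log m$ on expectations (via Jensen, as you do) drives $a_j$ only toward the fixed point $m^*$ of $f$. Since the bound in Lemma~\ref{lemma:decayAfromPSS} must be compatible with the deterministic inequality $|A_t| \geq 2d$, one has $f(2d) \geq 2d$ and hence $m^* \geq 2d$; for generic $C$ the fixed point $m^*$ is strictly larger than $2d + \tfrac12$, so the claim that ``a further bounded number of steps brings $a_j$ below $2d + \tfrac12$'' is unjustified, and Markov on $|A|-2d$ gives nothing. (A side remark: your worry about bounding $|A_0|$ is misplaced --- by definition $A_0$ is exactly the set of $2d$ edges incident to $X_0$, so $|A_0| = 2d$ deterministically, and under $\pi_p^x$ these edges are all closed.)

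What the drift \emph{does} yield is $a_j \leq m^*$ for all $j$, and hence $\P(|A_{jC_A/\mu}| \leq 2m^*) \geq \tfrac12$ by Markov. From there one needs a separate, direct argument that from any configuration with $|A| \leq K := 2m^*$ there is, uniformly in $n$ and $\mu$, probability at least $c = c(K,d,p) > 0$ of a regeneration within one further step --- by exhibiting an explicit scenario on the refresh times and jump attempts whose probability depends only on $K$, $C_A$ and $p$. Your proposed shortcut (``on the event $|A| = 2d$, the restriction $\eta|_A$ still has its Bernoulli law, so all edges are closed with probability $\geq (1-p)^{2d}$'') is likewise not justified as stated: the event $\{|A_{jC_A/\mu}| = 2d\}$ depends on the joint history of the walk and the refreshes and can bias the states of the incident edges. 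These two points are what \cite{Peres2015Random} actually works out; once a uniform lower bound $c$ on the one-step regeneration probability is in hand, your geometric-domination finish is correct.
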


\begin{lemma}\label{lemma:exponentialtailsR}
For all $d\geq1$ and $p\in(0,p_c(d))$, there exist constants $C_{\cR,1}=C_{\cR,1}(d,p)$, $C_{\cR,2}=C_{\cR,2}(d,p)>0$ and $C=C(d,p)>0$ such that for all $n\in\N$, $\mu\leq1$, $x\in\Z_n^d$ and $k\geq0$,
\begin{equation}
    \E_{x,\pi_{p}^x}\left[e^{C_{\cR,1}|\cR[\ttau_k,\ttau_{k+1}]|}\right]\leq C_{\cR,2}
\end{equation}
and in particular,
\begin{equation}
    \E_{x,\pi_{p}^x}\left[|\cR[\ttau_k,\ttau_{k+1}]|\right]\leq C.
\end{equation}
\end{lemma}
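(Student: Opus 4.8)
The result is \cite[Lemma~7.4]{Peres2015Random}; here is how its proof goes and where the difficulty lies. By the regeneration structure recalled above, the full system regenerates at $\ttau_k$: conditionally on $X_{\ttau_k}$, one has $\eta_{\ttau_k}\sim\pi_p^{X_{\ttau_k}}$ and the future $(M_{\ttau_k+s})_{s\geq0}$ is independent of $\cF^{\star}_{\ttau_k}$. Hence, by vertex-transitivity of $\Z_n^d$, the distribution of $|\cR[\ttau_k,\ttau_{k+1}]|$ under $\P_{x,\pi_p^x}$ depends on neither $k$ nor $x$ and coincides with that of $|\cR[0,\ttau_1]|$ under $\P_{0,\pi_p^0}$. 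It is therefore enough to find $c=c(d,p)>0$ with $\E_{0,\pi_p^0}[e^{c|\cR[0,\ttau_1]|}]<\infty$ uniformly in $n$ and $\mu\leq1$; up to the value of $c$ this is equivalent to the tail bound $\P_{0,\pi_p^0}(|\cR[0,\ttau_1]|\geq\ell)\leq Ce^{-c\ell}$, and the ``in particular'' estimate follows from $e^{ct}\geq1+ct$. Since $\ttau_1=\frac{C_A}{\mu}\tau_1$, on the event $\{\tau_1\leq J\}$ we have $|\cR[0,\ttau_1]|\leq|\cR[0,JC_A/\mu]|$, so the job reduces to combining, with $J$ chosen proportional to $\ell$, a tail bound for $\tau_1$ with a bound on how much of $\Z_n^d$ the walk can cover in time $JC_A/\mu$.

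\emph{Step 1: a geometric tail for $\tau_1$.} I would obtain $\P_{0,\pi_p^0}(\tau_1>j)\leq C\rho^j$ for some $\rho=\rho(d,p)<1$ from Lemma~\ref{lemma:decayAfromPSS}, by the same mechanism that underlies $\E_{x,\pi_p^x}[\tau_1]\leq C$ in Lemma~\ref{lemma:taubounded}. The discrete-time chain $R_j:=|A_{jC_A/\mu}|$ has $R_0=2d$ and, by Lemma~\ref{lemma:decayAfromPSS}, a geometric drift $\E[R_{j+1}\mid\cF^{\star}_{jC_A/\mu}]\leq\frac{R_j}{4}+C\log R_j\leq\frac{R_j}{2}+C'$ towards the bounded set $\{R=2d\}$, while from the state $R_j=2d$ the next block is a regeneration with probability bounded below by some $c_0(d,p)>0$ — which is precisely why regenerations occur at a positive rate. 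Geometric drift together with this minorisation gives the geometric tail by a standard Lyapunov-chain argument.

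\emph{Step 2: the walk covers slowly (the crux).} I would then use that, started from a regeneration, the walk covers only $O(1)$ new vertices per time $\frac1\mu$, with exponential concentration: $\P_{0,\pi_p^0}(|\cR[0,T]|\geq\ell)\leq Ce^{-c\ell}$ whenever $\ell\geq C_0\mu T$, uniformly in $n$, $\mu\leq1$ and $T\geq0$. The engine is a one-time-scale estimate — conditionally on $\cF^{\star}_t$, the number of vertices met in $[t,t+\frac{C_A}{\mu}]$ has an exponential moment bounded in terms of $|A_t|$ alone — which holds because such a window accommodates only $O(1)$ refreshes per edge, so that past the $O(|A_t|)$ vertices incident to the currently known region the walk can reach only a further region whose size inherits the exponential cluster-size tail of subcritical percolation. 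This is the only place where $p<p_c(d)$ is used, and it is essential: for $p>p_c$ the statement fails. Iterating the one-time-scale estimate over the $\mu T/C_A$ blocks of $[0,T]$, using the drift of $(R_j)$ from Step~1 to prevent the $|A_t|$-dependent factors from accumulating, yields the displayed bound. Making all of this precise is the displacement and confinement analysis of \cite[Section~7]{Peres2015Random}.

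\emph{Step 3: combination.} Applying Step~2 with $T=JC_A/\mu$, so that $C_0\mu T=C_0C_AJ$, and taking $J=\lceil\ell/(2C_0C_A)\rceil$ gives $\P_{0,\pi_p^0}(|\cR[0,JC_A/\mu]|\geq\ell)\leq Ce^{-c\ell}$; combining with Step~1,
\[
\P_{0,\pi_p^0}(|\cR[0,\ttau_1]|\geq\ell)\leq\P_{0,\pi_p^0}(\tau_1>J)+\P_{0,\pi_p^0}(|\cR[0,JC_A/\mu]|\geq\ell)\leq C\rho^{J}+Ce^{-c\ell}\leq C'e^{-c'\ell},
\]
because $\rho^{J}\leq\rho^{\ell/(2C_0C_A)}$ is itself exponentially small in $\ell$. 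This is the tail bound sought in the first paragraph, and the lemma follows. I expect Step~2 to be the real obstacle: unlike the tail of $\tau_1$, which can be read off from the contraction of $A_t$, it genuinely rests on the quantitative subcritical behaviour of the walk, and it has to be arranged so that the rare times at which $|A_t|$ is large are absorbed by that same contraction. Steps~1 and~3 are comparatively routine given the tools recalled above.
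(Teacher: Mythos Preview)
The paper does not prove this lemma at all: it is simply quoted from \cite[Lemma~7.4]{Peres2015Random}, as the paper states just before Lemma~\ref{lemma:taubounded}. You correctly identify this in your first sentence, and your three-step outline is a faithful summary of how the argument in \cite{Peres2015Random} runs, so there is nothing further to compare.
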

Finally we recall the following result from~\cite[Lemma 7.2]{Peres2015Random}.

\begin{lemma}[Local Central Limit Theorem]\label{lemma:localCLT}
For all $d\geq1$ and $p\in(0,p_c(d))$, there exists a constant $C=C(d,p)>0$, such that for all $n\in\N_0$, $\mu\leq1$, $x,y\in\Z_n^d$ and $k\geq0$,
\begin{equation}
    \P_{x,\pi_{p}^x}(X_{\ttau_k}=y)\leq C\left(\frac{1}{k^{d/2}}\vee\frac{1}{n^d}\right)
\end{equation}
and for all $k\geq\frac{n^2}{2}$,
\begin{equation}
    \P_{x,\pi_{p}^x}(X_{\ttau_k}=y)\geq\frac{1}{C n^d}.
\end{equation}
\end{lemma}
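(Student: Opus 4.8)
\emph{Proof idea.} Write $Y_k:=X_{\ttau_k}$. As recorded above, under $\P_{x,\pi_p^x}$ the increments $\xi_k:=Y_k-Y_{k-1}$, $k\ge1$, are i.i.d.\ on $\Z_n^d$ with a common law $\mu_n$ that is symmetric and, being invariant under permuting and reflecting coordinates, has covariance $\sigma_n^2 I$. The plan is to lift $Y$ to an i.i.d.-increment random walk $\widetilde Y$ on $\Z^d$ with $\widetilde Y_k\equiv Y_k-Y_0\pmod n$ (replace each $\xi_j$ by its minimal-$\ell^1$ representative in $\Z^d$), quote a classical local limit theorem for $\widetilde Y$, and recover both bounds by summing over the coset $n\Z^d+v$ with $v$ a lift of $y-x$. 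The only real work is to make all constants independent of $n$.

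To that end I would extract three $n$-free features. (i) There is $q=q(d,p)>0$ with $\mu_n(\xi_1=\pm e_1)\ge q$: on its first edge refresh the walk opens and crosses the $+e_1$ (resp.\ $-e_1$) edge at its current vertex and then regenerates without moving again, an event whose probability does not depend on $n$; hence $\sigma_n^2=\E[(\xi_1\cdot e_1)^2]\ge 2q$. (ii) Similarly $\mu_n(\xi_1=0)\ge q$ for some $q>0$, so $\mu_n$ (hence $\widetilde Y$) is aperiodic and the support of its increments contains $\{0,\pm e_1,\dots,\pm e_d\}$, which generates $\Z^d$. (iii) The minimal representative of $\xi_1$ in $\Z^d$ has $\ell^1$-norm at most $|\cR[\ttau_0,\ttau_1]|$, because the range of the walk between regenerations is connected; so by Lemma~\ref{lemma:exponentialtailsR} the increments of $\widetilde Y$ have exponential moments, uniformly in $n$.

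Thus $\widetilde Y$ is a symmetric, aperiodic, mean-zero random walk on $\Z^d$ that is non-degenerate with covariance $\asymp I$ and has uniform exponential moments, and the classical local CLT with exponential moments, whose error constants depend only on such bounds, gives $\P(\widetilde Y_k=w)\le Ck^{-d/2}e^{-|w|^2/(Ck)}$ for $|w|\le k$, $\P(\widetilde Y_k=w)\le e^{-|w|/C}$ for $|w|>k$, and, for each fixed $A\ge1$, a matching lower bound $\P(\widetilde Y_k=w)\ge c_A k^{-d/2}$ for $|w|\le A\sqrt k$ and $k\ge k_0(A)$. Since $\P_{x,\pi_p^x}(Y_k=y)=\sum_{w\in\Z^d:\,w\equiv y-x\,(n)}\P(\widetilde Y_k=w)$, the upper bound follows by inserting the Gaussian estimate into the coset sum: for $k\le n^2$ the lattice spacing $n$ exceeds $\sqrt k$ so the sum is $O(1)$, giving $\le Ck^{-d/2}$; for $k\ge n^2$ a Riemann-sum comparison bounds it by $Ck^{d/2}/n^d$, giving $\le C/n^d$; together these are $C(k^{-d/2}\vee n^{-d})$. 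For the lower bound with $k\ge n^2/2$, keep only coset points near the origin: fix $A=A(d)$ so that every coset of $n\Z^d$ has at least $(R/A)^d$ points of norm $\le Rn$ for each integer $R\ge A$, and take $R$ of order $\sqrt k/n\ (\ge1)$; then $\gtrsim(\sqrt k/n)^d$ coset points $w$ have $|w|=O(\sqrt k)$, each contributing $\gtrsim k^{-d/2}$, for a total of $\gtrsim n^{-d}$.

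The main obstacle is the uniformity in $n$ of the local CLT: $\mu_n$ is a law derived from the regeneration construction, not a fixed distribution, so one must isolate exactly the ingredients (i)--(iii) — a uniform positive lower bound on the one-step variance, aperiodicity with full-rank support, and uniform exponential moments — and invoke a version of the LCLT whose error terms depend only on those. A secondary subtlety is obtaining the lower bound already at $k\ge n^2/2$ rather than only for $k$ much larger than $n^2$, which is why one sums the $\Z^d$-LCLT over the $\gtrsim1$ coset points within $O(\sqrt k)$ of the origin instead of merely quoting that $Y$ is nearly uniform after many steps; small $n$ (where $\sqrt k/n$ need not be large) are disposed of by elementary irreducibility and aperiodicity bounds for the finite chain, absorbed into $C$.
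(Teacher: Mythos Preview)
The paper does not prove this lemma; it is quoted from \cite[Lemma~7.2]{Peres2015Random}, so there is no in-paper argument to compare against. Your sketch is the standard route to such a statement and is almost certainly close in spirit to the original argument: lift the regeneration walk to an i.i.d.\ random walk on $\Z^d$, verify $n$-free nondegeneracy, aperiodicity and exponential-moment hypotheses (your (i)--(iii)), apply a uniform local CLT on $\Z^d$, and recover the torus estimates by summing over the coset $y-x+n\Z^d$. Bounding the lifted increment by $|\cR[\ttau_0,\ttau_1]|$ via connectedness of the range and then invoking Lemma~\ref{lemma:exponentialtailsR} is indeed how one secures the uniform exponential moment.

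Two small points worth tightening. First, the minimal-$\ell^1$ representative is ambiguous when a coordinate of $\xi_1$ equals $\pm n/2$, so you should fix a symmetric tie-breaking convention (or, cleaner, lift the walk to $\Z^d$ edge by edge along its actual trajectory; the lifted increment then has $\ell^1$-norm at most the size of the lifted range, which equals $|\cR[\ttau_0,\ttau_1]|$ whenever the latter is below $n$, an event of overwhelming probability). Second, for the lower bound at $k$ just above $n^2/2$ your multi-point count is not yet available since $\sqrt{k}/n$ need not be large; but the single coset point nearest the origin already lies within $\sqrt{d}\,n/2\le\sqrt{d/2}\,\sqrt{k}$ and contributes $\gtrsim k^{-d/2}\asymp n^{-d}$, so one point suffices in the regime $n^2/2\le k\le Cn^2$, while your Riemann-sum argument takes over once $k/n^2$ is large.
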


\section{Cover time lower bound for $d\geq3$}\label{sec:lowerbound}

In this section we prove the lower bound of Theorem~\ref{th:maincover} for $d\geq3$. An important ingredient in the proof is Theorem~\ref{th:mainhitting} which we prove in Section~\ref{sec:hittingtime}. Then using Matthews' method, we are able to deduce the lower bound of Theorem \ref{th:maincover} for $d\geq3$ in Section \ref{sec:matthews}.

\subsection{Hitting times}\label{sec:hittingtime}
We prove Theorem \ref{th:mainhitting} in several steps. First, in Lemma~\ref{lemma:hittingboundary} we show that with probability bounded away from~0, the random walk reaches distance $\frac{n}{2}$ from $y$ before hitting $y$ itself. In Lemma \ref{lemma:hityuntilmixing}, we then show that this implies that with probability bounded away from 0, the random walk does not visit $y$ up until $2t_{\mix}$. Finally, in Lemma \ref{lemma:hityaftermixing}, we consider times after $2t_{\mix}$.

\begin{lemma}\label{lemma:hittingboundary}
For all $d\geq 3$ and $p\in(0,p_c(d))$, there exists a positive constant $c=c(d,p)$ such that for all $n\in\N$, $\mu\leq1$ and $x,y\in\Z_n^d$ such that $x\neq y$, we have
\begin{equation}
    \P_{x,\pi^x_{p}}(\widetilde{\sigma}<\sigma_y)\geq c,
\end{equation}
where $\widetilde{\sigma}:=\inf\{\ttau_k\geq0\mid d(X_{\ttau_k},y)\geq\frac{1}{2} n\}$ and $d(\cdot,\cdot)$ is the graph distance metric.
\end{lemma}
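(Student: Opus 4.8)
The plan is to compare the random walk observed along regeneration times, $(X_{\ttau_k})_{k\ge 0}$, with a genuine symmetric random walk on $\Z_n^d$, and exploit transience of the limiting walk in $d\ge 3$. Recall that started from $\delta_x\times\pi_p^x$ the process $(X_{\ttau_k})_{k\ge 0}$ is a Markovian symmetric random walk with i.i.d.\ increments and uncorrelated coordinates, with increment step-size having exponential tails (this follows from Lemma~\ref{lemma:exponentialtailsR}, since the displacement between $\ttau_k$ and $\ttau_{k+1}$ is bounded by $|\cR[\ttau_k,\ttau_{k+1}]|$). First I would argue that, viewed on $\Z^d$ (lifting before we wrap around the torus), this walk is transient: a symmetric, aperiodic, finite-variance random walk on $\Z^d$ with $d\ge 3$ returns to its starting point only finitely often, and in fact there is a constant $c_0=c_0(d,p)>0$ such that, starting from $x$, with probability at least $c_0$ the lifted walk never returns to $x$ at all. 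Equivalently, with probability at least $c_0$ the walk $(X_{\ttau_k})_k$ reaches graph distance $\ge n/2$ from $x=$ its start before ever returning to $x$. I would make this quantitative using the local CLT (Lemma~\ref{lemma:localCLT}): $\sum_{k\ge 1}\P_{x,\pi_p^x}(X_{\ttau_k}=x)\le C\sum_{k\ge 1}(k^{-d/2}\vee n^{-d})$, and the terms with $k\le n^2$ contribute $O(1)$ (since $d\ge 3$) while terms with $k> n^2$ contribute $O(1)$ as well (each is $O(n^{-d})$ and there are... actually this needs care — one should cut the sum at the time the walk has had a constant chance to exit a ball of radius $n/2$, before the torus geometry matters). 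So the cleaner route is: there is a deterministic $K=K(d,p)$ such that $\P_{x,\pi_p^x}(\ttau_K < \sigma_y \text{ is not the issue})$... let me instead phrase it as: on $\Z^d$, $\P(\text{walk hits } B(x,n/2)^c \text{ before returning to } x)\ge c_0$ uniformly in $n$, which is just transience plus a union bound over the finitely many near returns; this event, pulled back to the torus, forces $\widetilde\sigma < \infty$ with $d(X_{\widetilde\sigma},y)\ge n/2$ — but we still must ensure $y$ itself is not hit along the way.

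The second ingredient handles $y$: when $x\ne y$ and $y$ is far from $x$, the walk $(X_{\ttau_k})$ reaching distance $n/2$ from $y$ without hitting $y$ is automatic once it stays near $x$; the only dangerous case is when $x$ is itself within distance $n/2$ of $y$, in which case I would run the same transience argument centered so as to avoid $y$ rather than to avoid returning to $x$. Concretely: by transience of the lifted symmetric walk in $d\ge 3$, for any fixed target vertex $y$ the walk started from $x\ne y$ has probability bounded below (uniformly in $n$, $x$, $y$) of escaping to distance $n/2$ from $y$ before ever hitting $y$ — this is the standard fact that the Green's function is bounded and $G(x,y)/G(y,y)<1$, so the escape probability is at least $1-G(x,y)/G(y,y)\ge 1 - G(\cdot)_{\max}/G(y,y) \ge c_0 >0$ with $c_0$ depending only on $d$ (using that the worst case is $x$ a neighbour of $y$).

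Finally I would transfer this from the discrete-time walk $(X_{\ttau_k})_k$ back to the continuous-time walk $X$: the event $\{\widetilde\sigma<\sigma_y\}$ for $X$ contains the event that, along regeneration times, the walk $(X_{\ttau_k})$ hits $d(\cdot,y)\ge n/2$ before hitting $y$ \emph{and} moreover does not slip into $y$ during the excursions between consecutive regeneration times before that happens. The between-times issue is controlled by Lemma~\ref{lemma:exponentialtailsR}: with the escape happening in a bounded number $K$ of regeneration steps with probability bounded below, and each interval $[\ttau_k,\ttau_{k+1}]$ visiting $O(1)$ vertices in expectation with exponential tails, a union bound over $k\le K$ shows the probability that $X$ visits $y$ during one of the first $K$ intervals, given the discrete walk avoids $y$, is small (the discrete walk's positions are at distance $\ge 1$ from $y$, so an excursion from $X_{\ttau_k}$ must traverse that distance, which has exponentially small probability if it is large; for $X_{\ttau_k}$ a neighbour of $y$ we only get that the excursion has a bounded-below chance of not hitting $y$, so really one should phrase the whole discrete argument as avoiding $B(y,r_0)$ for a suitable constant radius $r_0$ and then absorb the constant-size near-$y$ behaviour into $c$). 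Shrinking $c_0$ to a smaller constant $c=c(d,p)>0$ to account for these finitely many bad events of bounded-below-but-not-one probability completes the proof.

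\textbf{Main obstacle.} The genuine difficulty is not the transience input — that is classical — but the coupling/transfer between the \emph{discrete-time} regeneration walk and the \emph{continuous-time} walk $X$: one must rule out that $X$ sneaks into $y$ \emph{between} regeneration times even though $(X_{\ttau_k})_k$ avoids a neighbourhood of $y$. I expect the right fix is to prove the discrete statement with $B(y, r_0)$ for a constant $r_0$ in place of $\{y\}$ and $n/2$ fixed, use Lemma~\ref{lemma:exponentialtailsR} to bound the probability of an excursion of length $\ge r_0$ between regenerations, and only at the very end observe that avoiding $B(y,r_0)$ for all relevant regeneration times implies $X$ does not hit $y$ (since each excursion has diameter $< r_0$ except on an event of small probability, summed over the bounded number of relevant steps). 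This also forces the escape argument to be run relative to $y$ (not relative to the return point $x$) so that the stopped walk's law is the same regardless of where $x$ sits relative to $y$.
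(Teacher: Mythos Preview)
Your transience heuristic for the regeneration walk is sound, but the transfer back to the continuous-time walk contains a genuine gap. You write that ``the escape happen[s] in a bounded number $K$ of regeneration steps with probability bounded below'', and then union-bound the between-regeneration excursions over $k\le K$. This is false: the increments $X_{\ttau_{k+1}}-X_{\ttau_k}$ have exponential tails uniform in $n$ (Lemma~\ref{lemma:exponentialtailsR}), so reaching graph distance $n/2$ from $y$ requires on the order of $n^2$ regeneration steps, not a constant number. Your proposed fix of having the discrete walk avoid $B(y,r_0)$ for a \emph{constant} $r_0$ then fails for the same reason: summing the excursion bound $C_{\cR,2}e^{-C_{\cR,1}r_0}$ over $\asymp n^2$ indices gives something of order $n^2 e^{-C_{\cR,1}r_0}$, which is not small. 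Taking $r_0\asymp\log n$ would salvage the union bound but destroys the uniform lower bound on the escape probability, since avoiding a ball of growing radius is no longer a transience statement with a dimension-only constant.

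The paper resolves this by letting the safety radius \emph{grow with $k$}: one bounds $\P_{x,\pi_p^x}(y\in\cR[\ttau_k,\ttau_{k+1}])$ by splitting on $\{d(X_{\ttau_k},y)\ge k^{1/(4d)}\}$, using Lemma~\ref{lemma:exponentialtailsR} on that event and the local CLT (Lemma~\ref{lemma:localCLT}) on its complement. This yields terms of order $e^{-ck^{1/(4d)}}+k^{1/4-d/2}$, which are summable in $k$ precisely because $d\ge 3$. That controls all $k$ from some large constant $K$ up to $\lceil n^2/2\rceil$; the first $K$ steps are handled by Doob's inequality under the extra assumption $d(x,y)\ge M$ for a large constant $M$; the local CLT lower bound then guarantees $\widetilde\sigma\le\ttau_{\lceil n^2/2\rceil}$ with probability bounded below. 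Finally, the assumption $d(x,y)\ge M$ is removed by an explicit path-construction argument (forcing the walk along a prescribed path of length $\le M+2$ away from $y$, with probability $\ge c^{M+2}$). Your Green's-function bound $1-G(x,y)/G(y,y)$ does address the discrete walk for nearby $x$, but it does not help with the between-regeneration visits to $y$ when $x$ is a neighbour of $y$, which is exactly what the explicit construction is for.
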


The proof of this lemma is done in a few steps. The first step consists of bounding the probability that the walk visits $y$ during an interval $[\ttau_k,\ttau_{k+1}]$. This probability is small if $X_{\ttau_k}$ is far away from $y$, because $|\cR[\ttau_k,\ttau_{k+1}]|$ is small. Furthermore, for large enough $k$ (say $k\geq K=K(d,p)$), the probability that $(X_{\ttau_k})$ is close to $y$ is small by the local Central Limit Theorem. So with high probability, $X$ will escape to distance $\frac{1}{2}n$ before hitting $y$ after $\ttau_{K}$. We also need to control the probability of $X$ hitting $y$ up until time $\ttau_K$. We show that this probability is very small if $x$ is at least some distance $M=M(d,p)$ from $y$. If $x$ is at distance less than $M$ from $y$, we show that with probability bounded away from~0, the random walk starting from~$x$ reaches distance $M$ before hitting $y$ and it then escapes to distance $\frac{1}{2}n$.

\begin{proof}
For $k\geq 0$, we let $E_k$ be the event that $X$ visits $y$ during $[\tilde{\tau}_k,\tilde{\tau}_{k+1}]$. Then
\begin{equation}
    \P_{x,\pi^x_{p}}(E_k)\leq\P_{x,\pi^x_{p}}(E_k|d(X_{\ttau_k},y)\geq k^{\frac{1}{4d}})+\P_{x,\pi^x_{p}}(d(X_{\ttau_k},y)\leq k^{\frac{1}{4d}}).
\end{equation}
Firstly, by Lemma~\ref{lemma:exponentialtailsR} we have 
\begin{equation}\label{eq:firstterm}
    \begin{split}
        \P_{x,\pi^x_{p}}(E_k|d(X_{\ttau_k},y)\geq k^{\frac{1}{4d}})\leq&\P_{x,\pi^x_{p}}(|\cR[\ttau_k,\ttau_{k+1}]|\geq k^{\frac{1}{4d}})\leq C_{\cR,2}\e^{-C_{\cR,1}k^{\frac{1}{4d}}}.
    \end{split}
\end{equation}
By Lemma \ref{lemma:localCLT}, it follows that there exists a constant $C_1=C_1(d,p)>0$ such that for $k\leq n^2$,
\begin{equation}\label{eq:secondterm}
    \P_{x,\pi^x_{p}}(d(X_{\ttau_k},y)\leq k^{\frac{1}{4d}})\leq C_1\frac{ k^{1/4}}{k^{d/2}}.
\end{equation}
Since the estimates \eqref{eq:firstterm} and \eqref{eq:secondterm} are summable for $d\geq3$, there exists $K=K(d,p)$ and~$C_2=C_2(d,p)$ such that
\begin{equation}
    \P_{x,\pi^x_{p}}\left(\bigcup_{k=K}^{\lceil\frac{1}{2}n^2\rceil-1} E_k\right)\leq\sum_{k=K}^{\lceil\frac{1}{2}n^2\rceil-1}\P_{x,\pi^x_{p}}(E_k)<C_2<1.
\end{equation}
By choosing $K$ to be large, we can make $C_2$ arbitrarily small, in particular smaller than $\frac{1}{6C_1}$. So with probability arbitrarily close to 1, $X$ does not visit $y$ during $[\ttau_K,\ttau_{\lceil\frac{1}{2} n^2\rceil}]$.

Now assume $d(x,y)\geq M$ for some universal constant $M=M(d,p)>0$. Let $A$ be the event that there exists $k\in\{0,\ldots,K\}$ with $d(X_{\ttau_k},y)\leq\frac{M}{2}$. Then applying Doob's $L^2$ inequality to the martingale $(X_{\ttau_k}-X_0)_k$ we get 
\begin{equation}
    \begin{split}
        \P_{x,\pi_{p}^x}(A)\leq&\P_{x,\pi_{p}^x}\left(\max_{k\in\{0,\ldots,K\}}\|X_{\ttau_k}-X_0\|_2\geq\frac{M}{2}\right)\\
        \leq&\frac{4\E_{x,\pi_{p}^x}[\|X_{\ttau_K}-X_0\|_2^2]}{\frac{M^2}{4}}=\frac{16K\E_{x,\pi_{p}^x}[\|X_{\ttau_1}-X_0\|_2^2]}{M^2},
    \end{split}
\end{equation}
where for the last equality we used the orthogonality of the increments. Using now Lemma~\ref{lemma:exponentialtailsR}, we see that this last bound goes to $0$ uniformly over $n$ and $\mu$ as $M\rightarrow\infty$. 

Let $B$ be the event that there exists $k\in\{0,\ldots,K\}$ with $|\cR[\ttau_k,\ttau_{k+1}]|\geq\frac{M}{2}$. Then by a union bound and Lemma~\ref{lemma:exponentialtailsR} again we deduce
\begin{equation}
    \begin{split}
        \P_{x,\pi_{p}^x}(B)\leq&\sum_{k=0}^K\P_{x,\pi_{p}^x}\left(|\cR[\ttau_k,\ttau_{k+1}]|\geq\frac{M}{2}\right)\leq (K+1)C_{\cR,2}e^{-C_{\cR,1}\frac{M}{2}},
    \end{split}
\end{equation}
which also tends to 0 uniformly over $n$ and $\mu$ as $M\rightarrow\infty$. Combining these two estimates, we obtain that the probability that $X$ visits $y$ up until time $\ttau_K$ can be made arbitrarily small by choosing $M$ large enough.

Lastly, using Lemma \ref{lemma:localCLT}, we have for all $k\geq \frac{1}{2} n^2$ 
\begin{equation}
    \P_{x,\pi^x_{p}}(d(X_{\ttau_k},y)\geq\tfrac{1}{2}n)\geq \frac{1}{2C_1},
\end{equation}
which implies that
\begin{equation}
    \P_{x,\pi^x_{p}}\left(\widetilde{\sigma}\leq \lceil\frac{1}{2}n^2\rceil\frac{C_{A}}{\mu}\right)\geq \frac{1}{2C_1}.
\end{equation}

Now define 
\begin{equation}
    \begin{split}
        A_1=&\{X\text{ does not visit }y\text{ during }[0,\ttau_K]\},\quad A_2=\{X\text{ does not visit }y\text{ during }[\ttau_K,\ttau_{\lceil\frac{1}{2} n^2\rceil}]\},\\
        A_3=&\{\widetilde{\sigma}\leq\lceil\frac{1}{2}n^2\rceil\frac{C_{A}}{\mu}\}.
    \end{split}
\end{equation}
Then, combining all prior estimates, we obtain
\begin{equation}\label{eq:donothitybeforesigmatilde}
    \begin{split}
        \P_{x,\pi^x_{p}}(X\text{ does not visit }y\text{ during }[0,\widetilde{\sigma}])\geq&\P_{x,\pi^x_{p}}(A_1\cap A_2\cap A_3)\\
        \geq&\P_{x,\pi_{p}^x}(A_3)-P_{x,\pi_{p}^x}(A^c_1)-P_{x,\pi_{p}^x}(A^c_2)\\
        \geq&\frac{1}{2C_1}-\frac{1}{6C_1}-\frac{1}{6C_1}=\frac{1}{6C_1}.
    \end{split}
\end{equation}
Finally, we want to get rid of the assumption that $d(x,y)\geq M$ for some $M$. Let $\widetilde{\sigma}_{M}=\inf\{\ttau_k\geq0:d(X_{\ttau_k},y)\geq M\}$. Let $x\in\Z_n^d$ such that $0<d(x,y)<M$ and let $\Gamma=\{v_0,\ldots,v_m,\}\subset \Z_n^d$ be a path from $x=v_0$ to some vertex $z=v_m$ with $d(z,y)\geq M$. Without loss of generality, $|\Gamma|\leq M+2$. Let~$A_j$ be the event that $X_{\ttau_j}=v_j$ and that after time $\ttau_j$ the following occur:
\begin{enumerate}
    \item the edge $e_j$ connecting $v_j$ and $v_{j+1}$ opens up before any of the other edges incident to $v_j$,
    \item before any of the other edges incident to $v_j$ or $v_{j+1}$ open up, the edge $e_j$ closes again, at which time the walk $X$ is at vertex $v_{j+1}$ and
    \item all edges incident to $v_{j}$ refresh before any of the edges incident to $v_{j+1}$ open up.
\end{enumerate}
It is immediate to see that, for all $j$,  we have $\P_{x,\pi_p^x}(A_j)>c>0$ for some constant $c=c(d,p)>0$ that is independent of $\mu$ and $n$. Furthermore, on the event $A_j$, $\cR[\ttau_j,\ttau_{j+1}]=\{v_j,v_{j+1}\}$. Hence,
\begin{equation}
    \begin{split}
        \P_{x,\pi_{p}^x}(\widetilde{\sigma}_M<\sigma_y)\geq\P_{x,\pi_{p}^x}\left(\bigcap_{j=0}^{m-1}A_j\right)\geq c^{M+2}.
    \end{split}
\end{equation}
For the last inequality we use the fact that, conditioning on the event that $X_{\ttau_j}=v_j$, the events $A_j$ are independent. Thus, using the strong Markov property at the stopping time~$\widetilde{\sigma}_M$ together with~\eqref{eq:donothitybeforesigmatilde} we are able to obtain our final estimate
\begin{equation}
    \begin{split}
        \P_{x,\pi_{p}^x}(\widetilde{\sigma}<\sigma_y)\geq&\P_{x,\pi_{p}^x}(\widetilde{\sigma}<\sigma_y\mid \widetilde{\sigma}_M<\sigma_y)\,\P_{x,\pi_{p}^x}(\widetilde{\sigma}_M<\sigma_y)\\
        \geq&\sum_{\substack{z\in\Z_n^d\\d(z,y)\geq M}}\P_{z,\pi_{p}^z}(\widetilde{\sigma}<\sigma_y)\P_{x,\pi_{p}^x}(X_{\widetilde{\sigma}_M}=z|\widetilde{\sigma}_M<\sigma_y) c^{M+2}\geq\frac{1}{6C_1}c^{M+2}>0.
    \end{split}
\end{equation}
This now concludes the proof.
\end{proof}

\begin{lemma}\label{lemma:hityuntilmixing}
For all $d\geq3$ and $p\in(0,p_c(d))$, there exists $c=c(d,p)>0$ such that for all $n\in\N$, $\mu\in(0,1)$ and $x,y\in\Z_n^d$ such that $x\neq y$,
\begin{equation}
    \P_{x,\pi^x_{p}}(y\in\cR[0,2t_{\mix}])<c<1.
\end{equation}
\end{lemma}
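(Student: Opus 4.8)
The plan is, via \cref{lemma:hittingboundary} and the strong Markov property, to reduce to the case where the walk starts at a vertex $z$ with $d(z,y)\ge n/2$, and then to handle $[0,2t_{\mix}]$ by splitting it into the \emph{pre-mixing} part $[0,t_{\mix}]$ and the \emph{post-mixing} part $[t_{\mix},2t_{\mix}]$. For the reduction: on the event $\{\widetilde{\sigma}<\sigma_y\}$, which has probability at least some $c_1=c_1(d,p)>0$ by \cref{lemma:hittingboundary}, the time $\widetilde{\sigma}$ is one of the regeneration times $\ttau_k$, so conditionally on $X_{\widetilde{\sigma}}$ we have $\eta_{\widetilde{\sigma}}\sim\pi_p^{X_{\widetilde{\sigma}}}$, while $d(X_{\widetilde{\sigma}},y)\ge n/2$ and $y\notin\cR[0,\widetilde{\sigma}]$. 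Since $\cR[0,2t_{\mix}]\subseteq\cR[0,\widetilde{\sigma}]\cup\cR[\widetilde{\sigma},\widetilde{\sigma}+2t_{\mix}]$, applying the strong Markov property at $\widetilde{\sigma}$ gives
\[
    \P_{x,\pi_p^x}\!\big(y\notin\cR[0,2t_{\mix}]\big)\ \ge\ c_1\inf_{z:\,d(z,y)\ge n/2}\P_{z,\pi_p^z}\!\big(y\notin\cR[0,2t_{\mix}]\big),
\]
so it is enough to bound $\P_{z,\pi_p^z}(y\in\cR[0,2t_{\mix}])$ away from $1$ uniformly over such $z$ and over all sufficiently large $n$.

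Fix $z$ with $d(z,y)\ge n/2$. For the post-mixing part, by translation invariance of $\Z_n^d$ and of $u\times\pi_p$ the probability $\P_{u,\pi_p}(y\in\cR[0,t_{\mix}])$ does not depend on $y$ and hence equals $n^{-d}\,\E_{u,\pi_p}|\cR[0,t_{\mix}]|$; since $[0,t_{\mix}]$ meets at most $t_{\mix}\mu/C_A=O(n^2)$ regeneration intervals, each contributing $O(1)$ vertices in expectation by \cref{lemma:exponentialtailsR}, a routine estimate gives $\E_{u,\pi_p}|\cR[0,t_{\mix}]|\le Cn^2$, so $\P_{u,\pi_p}(y\in\cR[0,t_{\mix}])\le Cn^{2-d}=o(1)$ for $d\ge3$. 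Using the Markov property at time $t_{\mix}$ together with the definition of $t_{\mix}$ — the law of $(X_{t_{\mix}},\eta_{t_{\mix}})$ is within total variation distance $\tfrac14$ of $u\times\pi_p$, also from the initial law $\delta_z\times\pi_p^z$ by convexity — we deduce $\P_{z,\pi_p^z}(y\in\cR[t_{\mix},2t_{\mix}])\le\tfrac14+o(1)$.

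For the pre-mixing part, cover $[0,t_{\mix}]$ by the at most $K_n=O(n^2)$ regeneration intervals it meets, so $\P_{z,\pi_p^z}(y\in\cR[0,t_{\mix}])\le\sum_{k=0}^{K_n}\P_{z,\pi_p^z}(y\in\cR[\ttau_k,\ttau_{k+1}])$, and split at a constant $K_0=K_0(d,p)$. For $k<K_0$: by Doob's $L^2$ inequality for the martingale $(X_{\ttau_k}-X_0)_k$ (using orthogonality of the increments and the second moment bound for $\|X_{\ttau_1}-X_0\|_2$ that follows from \cref{lemma:exponentialtailsR}, exactly as in the proof of \cref{lemma:hittingboundary}), with probability $\ge1-CK_0/n^2$ we have $d(X_{\ttau_k},y)\ge n/4$ for all $k\le K_0$; on that event $y\in\cR[\ttau_k,\ttau_{k+1}]$ forces $|\cR[\ttau_k,\ttau_{k+1}]|>n/4$, which by \cref{lemma:exponentialtailsR} has probability $\le C_{\cR,2}e^{-C_{\cR,1}n/4}$, so this block is $o(1)$ for each fixed $K_0$. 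For $K_0\le k\le K_n$: conditioning on $X_{\ttau_k}$ and using that $\sum_w\P_{z,\pi_p^z}(y\in\cR[\ttau_k,\ttau_{k+1}]\mid X_{\ttau_k}=w)=\E_{0,\pi_p^0}|\cR[0,\ttau_1]|\le C$ (by translation invariance of the regeneration structure and \cref{lemma:exponentialtailsR}) together with the local central limit bound $\P_{z,\pi_p^z}(X_{\ttau_k}=w)\le C(k^{-d/2}\vee n^{-d})$ of \cref{lemma:localCLT}, we get $\P_{z,\pi_p^z}(y\in\cR[\ttau_k,\ttau_{k+1}])\le C(k^{-d/2}\vee n^{-d})$, so this block is at most $C\sum_{k\ge K_0}k^{-d/2}+O(n^{2-d})$. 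As $d\ge3$, the series converges, so enlarging $K_0$ makes its tail arbitrarily small and the remaining contributions are $o(1)$. Combining the two parts, for all $n$ exceeding some $N^*=N^*(d,p)$ we get $\P_{z,\pi_p^z}(y\in\cR[0,2t_{\mix}])<\tfrac12$ uniformly in $z$ with $d(z,y)\ge n/2$, which together with the displayed inequality proves the lemma for $n\ge N^*$. For the finitely many $n<N^*$, one uses that there is positive probability (bounded below uniformly in $\mu\le1$ for each fixed $n$, since $t_{\mix}\le Cn^2/\mu$) that no edge incident to $x$ is ever open during the fixed interval $[0,2t_{\mix}]$, in which case the walk never moves and $y\notin\cR[0,2t_{\mix}]$; hence $\P_{x,\pi_p^x}(y\in\cR[0,2t_{\mix}])<1$ for all $x\ne y$, and the maximum over these finitely many cases is a constant strictly below $1$.

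I expect the pre-mixing estimate to be the main obstacle. A naive union bound over all $\asymp n^2$ regeneration intervals in $[0,t_{\mix}]$, using only $\P(y\in\cR[\ttau_k,\ttau_{k+1}])\le C(k^{-d/2}\vee n^{-d})$, only bounds the probability by a finite constant, which need not be below $1$; one genuinely has to treat the first $O(1)$ regeneration intervals separately — where the walk, started at distance $n/2$ from $y$, is overwhelmingly unlikely to reach $y$, by the exponential tails of $|\cR[\ttau_k,\ttau_{k+1}]|$ — and then use the convergence of $\sum_k k^{-d/2}$ (which is where the hypothesis $d\ge3$ is needed) together with a large cutoff $K_0$ to make the remaining tail small.
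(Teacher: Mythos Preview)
Your argument is correct and uses the same toolkit as the paper (Lemma~\ref{lemma:hittingboundary}, Doob's maximal inequality, the local CLT of Lemma~\ref{lemma:localCLT}, and the exponential tails of Lemma~\ref{lemma:exponentialtailsR}), but the decomposition is genuinely different. After the common reduction via $\widetilde{\sigma}$, the paper does \emph{not} split at $t_{\mix}$; instead it introduces a threshold $(\log n)^2$ and defines two bad events on the whole window $[\widetilde{\sigma},2t_{\mix}]$: $A=\{$some $X_{\ttau_k}$ comes within $(\log n)^2$ of $y\}$ and $B=\{$some $|\cR[\ttau_k,\ttau_{k+1}]|\ge(\log n)^2\}$. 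It bounds $\P(A)$ by Doob on the first $C_2n^2$ regeneration steps plus the local CLT on the remaining $O(n^2)$ steps, and $\P(B)$ by a union bound with exponential tails; then $\{\widetilde\sigma<\sigma_y\}\cap A^c\cap B^c$ already forces $y\notin\cR[0,2t_{\mix}]$. Your route instead isolates the post-$t_{\mix}$ part via total variation closeness to stationarity (essentially anticipating the argument of Lemma~\ref{lemma:hityaftermixing}), and for the pre-$t_{\mix}$ part replaces the $(\log n)^2$ threshold by the slick identity $\sum_w\P_{w,\pi_p^w}(y\in\cR[0,\ttau_1])=\E_{0,\pi_p^0}|\cR[0,\ttau_1]|$, which together with $\max_w\P(X_{\ttau_k}=w)\le C(k^{-d/2}\vee n^{-d})$ gives a directly summable bound once $k\ge K_0$. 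Both approaches need $d\ge3$ at the same place (summability of $k^{-d/2}$, respectively smallness of $n^{2-d}(\log n)^{2d}$). One small imprecision: your justification of $\E_{u,\pi_p}|\cR[0,t_{\mix}]|\le Cn^2$ appeals to regeneration intervals, but Lemmas~\ref{lemma:taubounded} and~\ref{lemma:exponentialtailsR} are stated for the $\pi_p^x$ initial law, not $\pi_p$; the bound is nonetheless correct and is what the paper proves in Lemma~\ref{lemma:hityaftermixing} via the subcritical percolation comparison, so you may simply cite that instead.
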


In the proof below, we first show that, after reaching distance $\frac{1}{2}n$ from $y$, the probability that the walk $(X_{\ttau_k})_{k\in\N_0}$ comes within distance $(\log n)^2$ of $y$ before time $2t_{\mix}$ is very small. We then show that, between times $\ttau_k$ and $\ttau_{k+1}$, the probability that the random walk travels a distance of at least $(\log n)^2$ is very small. This shows that the probability that the walk hits $y$ during the time interval $[\tsigma,2t_{\mix}]$ is small. By the previous lemma, the probability that the walk hits $y$ during the time interval $[0,\tsigma]$ is bounded away from 1, which completes the proof.

\begin{proof}
Let $A$ be the event that that there exists $k\in\N$ such that $\widetilde{\sigma}\leq\ttau_k\leq2 t_{\mix}$ and $d(X_{\ttau_k},y)\leq(\log n)^2$. Note that $\ttau_k\geq\frac{kC_{A}}{\mu}$ and $2t_{\mix}\leq\frac{C_1n^2}{\mu}$ for some constant $C_1=C_1(d,p)>0$ by Theorem~\ref{theorem:mixingupperbound}. Let $C_2=C_2(d,p)>0$ be some constant whose value we will specify later on in the proof. Then, using the strong Markov property at the stopping time $\tsigma$ in the second step below, we obtain
\begin{equation}
    \begin{split}
        \P_{x,\pi^x_{p}}(A)\leq&\max_{\substack{z\in\Z_n^d:d(z,y)\geq\frac{1}{2}n}}\P_{z,\pi_{p}^z}(\exists\ \ttau_{k}\in[0,2t_{\mix}]:d(X_{\ttau_k},y)\leq(\log n)^2)\\
        \leq&\max_{\substack{z\in\Z_n^d:d(z,y)\geq\frac{1}{2}n}}\P_{z,\pi^z_{p}}\left(\exists\ k\in\{0,\ldots,\lceil\frac{C_1}{C_{A}}n^2\rceil\}\colon d(X_{\ttau_k},y)\leq (\log n)^2\right)\\
        \leq&\max_{\substack{z\in\Z_n^d:d(z,y)\geq\frac{1}{2}n}}\P_{z,\pi^z_{p}}\left(\max_{k\in\{0,\ldots,\lceil C_2 n^2\rceil\}}\|X_{_{\ttau_k}}-X_0\|_2>\frac{n}{4}\right)\\
        &+\sum_{k=\lceil C_2n^2\rceil+1}^{\lceil\frac{C_1}{C_{A}}n^2\rceil}\max_{\substack{z\in\Z_n^d:d(z,y)\geq\frac{1}{2}n}}\P_{z,\pi^z_{p}}(d(X_{\ttau_k},y)\leq (\log n)^2).
    \end{split}
\end{equation}
We now apply Doob's maximal inequality to the martingale~$(X_{\ttau_k})_{k\geq0}$ for the first part and Lemma~\ref{lemma:localCLT} for the second part to obtain
\begin{equation}
    \begin{split}
        \P_{x,\pi^x_{p}}(A)\leq&\max_{\substack{z\in\Z_n^d:d(z,y)\geq\frac{1}{2}n}}\frac{4\E_{z,\pi^z_{p}}[\|X_{\ttau_{\lceil C_2 n^2\rceil}}-X_0\|^2_2]}{\frac{n^2}{16}}+\sum_{k=\lceil C_2n^2\rceil+1}^{\lceil\frac{C_1}{C_{A}}n^2\rceil}C_3(\log n)^{2d}\left(\frac{1}{k^{d/2}}\vee\frac{1}{n^d}\right)\\
        \leq&\frac{64\lceil C_2n^2\rceil\E_{0,\pi_{p}^0}[\|X_{\ttau_{1}}-X_0\|^2_2]}{n^2}+C_4\frac{(\log n)^2}{n^{d-2}}
    \end{split}
\end{equation}
for some constants $C_3=C_3(d,p)>0$ and $C_4=C_4(d,p,C_2)>0$. By first choosing $C_2$ small enough, and then $n$ large enough, the final bound may be made arbitrarily small, because $d\geq3$. The last inequality follows from the independence of the increments of $(X_{\ttau_k})_{k\geq1}$, and we may remove the maximum over $z$ by symmetry.

Furthermore, let $B$ be the event that there exists $k\in\N$ such that $\widetilde{\sigma}\leq\ttau_k\leq2 t_{\mix}$ and $|\cR[\ttau_k,\ttau_{k+1}]|\geq (\log n)^2$. Then, analogously to the previous estimate,
\begin{equation}
    \begin{split}
        \P_{x,\pi^x_{p}}(B)\leq&\sum_{k=0}^{\lceil\frac{C_3}{C_{A}}n^2\rceil}\max_{\substack{z\in\Z_n^d:d(z,y)\geq\frac{1}{2}n}}\P_{z,\pi^z_{p}}(|\cR[\ttau_k,\ttau_{k+1}]|\geq (\log n)^2)\\
        \leq&\left(\frac{C_3}{C_{A}}+2\right)n^2C_{\cR,2}e^{-C_{\cR,1}(\log n)^2}.
    \end{split}
\end{equation}

There exists $N=N(d,p)$ such that for all $n\geq N$, both expressions are smaller than\\ $\frac{1}{4}\P_{x,\pi^x_{p}}(\tilde{\sigma}<\sigma_y)>\frac{1}{4}c$, with $c=c(d,p)$ as in Lemma \ref{lemma:hittingboundary}. So,
\begin{equation}
    \begin{split}
        \P_{x,\pi^x_{p}}(y\not\in\cR[0,2t_{\mix}])\geq&\P_{x,\pi^x_{p}}(\{\tilde{\sigma}<\sigma_y\}\cap A^c\cap B^c)\\
        \geq&\P_{x,\pi^x_{p}}(\tilde{\sigma}<\sigma_y)-\P_{x,\pi^x_{p}}(A)-\P_{x,\pi^x_{p}}(B)\geq \frac{1}{2}c>0,
    \end{split}
\end{equation}
which completes the proof.
\end{proof}

\begin{lemma}\label{lemma:hityaftermixing}
For all $d\geq3$, $p\in(0,p_c(d))$ and $\delta\in(0,1)$, there exists a constant $C=C(d,p,\delta)>0$ such that for all $n\in\N$, $\mu\leq1$, $x\in\Z_n^d$ and $y\neq x$
\begin{equation}\label{eq:hityaftermixing}
    \P_{x,\pi^x_{p}}(y\not\in\cR[2t_{\mix},C\frac{n^d}{\mu}])>1-\delta.
\end{equation}
\end{lemma}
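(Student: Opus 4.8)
The plan is to exploit the fact that after time $2t_{\mix}$ the full system is within total variation distance $1/4$ of stationarity, so the walk's position is essentially uniform, and then to run the regeneration machinery from time $2t_{\mix}$ onward. First I would observe that by the definition of $t_{\mix}$, at time $2t_{\mix}$ the law of $(X_{2t_{\mix}},\eta_{2t_{\mix}})$ is at distance at most $1/4$ from $u\times\pi_p$; combined with the fact that the environment fully refreshes on a timescale $O(\log(n^d)/\mu)$, which is $o(n^d/\mu)$, we can afford to wait a further $O(\log n/\mu)$ time so that the environment is genuinely close to stationary and then pick up a regeneration time $\ttau_0$ (relative to the shifted process). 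Concretely, let $T:=2t_{\mix}$ and decompose the interval $[T, C n^d/\mu]$ into the first regeneration block and the subsequent ones. Using Lemma \ref{lemma:taubounded} the first regeneration time after $T$ occurs within $O(1/\mu)$ in expectation, and by Lemma \ref{lemma:exponentialtailsR} the number of vertices visited in that block is $O(1)$ in expectation, so with probability $>1-\delta/3$ the walk does not hit $y$ before the first regeneration time after $T$ unless $X_T$ is already within a bounded distance of $y$ — and that bad event has probability $O(1/n^d)$ by near-uniformity of $X_T$.

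Next I would condition on being at a regeneration time with environment $\pi_p^{X_{\ttau_0}}$ and count regeneration blocks. Set $K := K(d,p,\delta)$ a large integer to be chosen, and let $N$ be the number of regeneration blocks fitting inside $[\ttau_0, \ttau_0 + K n^d]$ (in $\tau$-units, i.e. real-time length $\approx K C_A n^d/\mu$). By Lemma \ref{lemma:taubounded} and a law-of-large-numbers / concentration argument for the i.i.d.\ increments $(\ttau_{k+1}-\ttau_k)$, with probability $>1-\delta/3$ we have $N \le K' n^d$ for a suitable constant $K' = K'(d,p)$. On this event, the event $\{y \in \cR[\ttau_0, \ttau_0 + \text{(this interval)}]\}$ is contained in $\bigcup_{k=0}^{K' n^d} \{X_{\ttau_k} = y\} \cup \bigcup_{k=0}^{K' n^d} (E_k \cap \{X_{\ttau_k}\neq y\})$ where $E_k$ is the event that the walk visits $y$ during $[\ttau_k,\ttau_{k+1}]$. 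The first union has probability at most $\sum_{k=0}^{K' n^d} \P(X_{\ttau_k}=y) \le \sum_{k=0}^{K' n^d} C(k^{-d/2}\vee n^{-d})$ by the Local CLT (Lemma \ref{lemma:localCLT}); for $d\ge 3$ the $k^{-d/2}$ part sums to a constant and the $n^{-d}$ part contributes $O(K' n^d \cdot n^{-d}) = O(K')$, so this bound is $O(K')$ — which is \emph{not} small. The fix is to choose the time horizon the other way: take $C = C(d,p,\delta)$ \emph{small}, so that the number of blocks is at most $\varepsilon n^d$ with $\varepsilon$ as small as we like; then the first union is bounded by $C_{\text{LCLT}}\cdot\varepsilon$ plus the convergent tail, which can be made $< \delta/3$, and the second union is bounded via Lemma \ref{lemma:exponentialtailsR} by first conditioning on $d(X_{\ttau_k},y)$: the walk visits $y$ in block $k$ only if $|\cR[\ttau_k,\ttau_{k+1}]| \ge d(X_{\ttau_k},y)$, and for the overwhelming majority of blocks $X_{\ttau_k}$ is at distance $\gtrsim n^{1/2}$ from $y$ (again by the Local CLT upper bound, the expected number of blocks with $X_{\ttau_k}$ within distance $n^{1/4}$ of $y$ is $O(\varepsilon n^d \cdot n^{d/4} \cdot n^{-d}) = O(\varepsilon n^{d/4})$... this needs care), so the exponential tail $C_{\cR,2}e^{-C_{\cR,1} d(X_{\ttau_k},y)}$ kills the sum.

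Assembling the three $\delta/3$-bounds (the first-block escape, the block-count concentration, and the no-hit-within-the-blocks estimate) and using the strong Markov property at $\ttau_0$ gives \eqref{eq:hityaftermixing} for all $n$ large enough, and then we absorb small $n$ into the constant $C$ by taking $C$ large there (for $n$ bounded, $C n^d/\mu$ can be made smaller than, say, the expected time to make one step, giving a trivial bound). The main obstacle I expect is the bookkeeping in the second paragraph's final estimate: controlling the number of regeneration blocks in which $X_{\ttau_k}$ lands near $y$, since a naive union bound over $\lesssim n^d$ blocks against a per-block hitting probability of $\lesssim n^{-d}$ only gives an $O(1)$ bound, not a small one. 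Resolving this forces us to take the time horizon $C n^d/\mu$ with $C$ small (not large) for large $n$, and to combine the Local CLT distance estimate with the exponential-tail bound on $|\cR[\ttau_k,\ttau_{k+1}]|$ rather than using either one alone; getting the two decay rates to cooperate — polynomially many blocks, each contributing either $O(n^{-d})$ or an exponentially small term depending on the distance to $y$ — is the delicate part.
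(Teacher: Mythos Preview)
Your overall direction is right --- in particular, you correctly identify that $C$ must be taken \emph{small} so that the time window $[2t_{\mix}, Cn^d/\mu]$ contains only $\varepsilon n^d$ regeneration blocks with $\varepsilon$ small. But there is a genuine gap in your treatment of the ``first union'' $\bigcup_k \{X_{\ttau_k}=y\}$. You bound $\sum_{k\leq \varepsilon n^d}\P(X_{\ttau_k}=y)$ by $\sum_{k}C(k^{-d/2}\vee n^{-d})$ via Lemma~\ref{lemma:localCLT} and then claim that ``$C_{\mathrm{LCLT}}\cdot\varepsilon$ plus the convergent tail can be made $<\delta/3$''. The convergent tail $\sum_{k\geq 1}k^{-d/2}$ is a \emph{fixed} positive constant depending only on $d$, so no choice of $\varepsilon$ makes it small. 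The point is that the Local CLT upper bound is for a \emph{fixed} starting point $X_{\ttau_0}$, and for small $k$ it is simply too weak: if $X_{\ttau_0}$ happens to land near $y$, the first few blocks each contribute probability of order one, and shrinking the window does nothing about this. Your subsequent attempt to patch things by combining the LCLT distance estimate with the exponential tail of $|\cR[\ttau_k,\ttau_{k+1}]|$ runs into the same wall (as your own computation ``$O(\varepsilon n^{d/4})$\ldots this needs care'' already hints).

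The missing idea is to exploit the near-uniformity of the starting point \emph{instead of} the Local CLT. This is exactly what the paper does, and it bypasses the regeneration machinery entirely. Using reversibility and mixing (via \cite[Lemma~6.17]{levin2017markov}) one gets
\[
\P_{x,\pi_p^x}\bigl(y\in\cR[2t_{\mix},Cn^d/\mu]\bigr)\leq 4\,\P_{u,\pi_p}\bigl(y\in\cR[0,Cn^d/\mu]\bigr)=\frac{4}{n^d}\,\E_{u,\pi_p}\bigl[|\cR[0,Cn^d/\mu]|\bigr],
\]
the last equality being pure symmetry. The expected range is then bounded directly by a subcritical-percolation argument: choose $\beta=\beta(d,p)$ so that the union of open edges over any interval of length $\beta/\mu$ is still a subcritical configuration, whence $\E_{u,\pi_p}[|\cR[t,t+\beta/\mu]|]\leq C_1$; summing over $\lceil Cn^d/\beta\rceil$ such intervals yields $4C_1C/\beta<\delta$ for $C$ small. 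No regeneration times, no LCLT, no block-by-block distance analysis are needed. If you prefer to rescue your approach, replace the LCLT step by the observation that under $\P_{u,\pi_p}$ one has $\P(y\in\cR[\ttau_k,\ttau_{k+1}])\leq C'n^{-d}$ for every $k$ by symmetry; then the union bound over $\varepsilon n^d$ blocks gives $C'\varepsilon$, which \emph{is} small --- but at that point you have essentially rediscovered the paper's expected-range argument with extra scaffolding.
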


\begin{proof}
Using that $(M_t)_{t\geq0}$ is a time-homogeneous, reversible Markov process combined with \cite[Lemma 6.17]{levin2017markov}, we obtain
\begin{equation}
    \begin{split}
        &\P_{x,\pi^x_{p}}(y\in\cR[2t_{\mix},C\tfrac{n^d}{\mu}])\leq4\P_{u,\pi_p}(y\in\cR[0,C\tfrac{n^d}{\mu}-2t_{\mix}])=\frac{4}{n^d}\E_{u,\pi_p}[|\cR[0,C\tfrac{n^d}{\mu}-2t_{\mix}]|].
    \end{split}
\end{equation}

To bound this final expectation we now use an argument from the proof of \cite[Theorem 1.12]{Peres2015Random}. Since $p$ is strictly smaller than $p_c(d)$, there exists some $\beta=\beta(d,p)>0$ such that, when $\eta_0\sim\pi_p$, the probability that a fixed edge $e$ is open at some point during $[0,\frac{\beta}{\mu}]$ can be made strictly smaller than~$p_c(d)$. Let this probability be $p'=p'(\beta)$. Then the size of the set of vertices that the random walk can visit during $[0,\frac{\beta}{\mu}]$ can be stochastically dominated by the size of the open cluster of the origin of $p'$-percolation. Since $p'<p_c(d)$ and by stationarity, there exists a positive constant $C_1=C_1(d,p')<\infty$ such that for all $t\geq0$
\begin{equation}
    \E_{u,\pi_p}[|\cR[t,t+\tfrac{\beta}{\mu}]|]=\E_{u,\pi_p}[|\cR[0,\tfrac{\beta}{\mu}]|]\leq C_1.
\end{equation}
Now choosing $C$ small enough we get
\begin{equation}
    \begin{split}
        \frac{4}{n^d}\E_{u,\pi_p}[|\cR[0,C\tfrac{n^d}{\mu}-2t_{\mix}]|]\leq&\frac{4}{n^d}\sum_{j=0}^{\lceil (Cn^d-2t_{\mix})/\beta\rceil-1}\E_{u,\pi_p}[|\cR[2t_{\mix}+j\beta/\mu,2t_{\mix}+(j+1)\beta/\mu]|]\\
        \leq&\frac{4}{n^d}\frac{Cn^d}{\beta}C_{1}<\delta,
    \end{split}
\end{equation}
which completes the proof.
\end{proof}

\begin{proof}[Proof of Theorem \ref{th:mainhitting}]
Combining Lemmas \ref{lemma:hityuntilmixing} and \ref{lemma:hityaftermixing}, we obtain that there exist constants $C=C(d,p)>0$ and $c=c(d,p)>0$ such that
\begin{equation}
    \begin{split}
        \P_{x,\pi^x_{p}}\left(\sigma_y\geq C\frac{n^d}{\mu}\right)\geq\P_{x,\pi^x_{p}}\left(y\not\in\cR[2t_{\mix},C\frac{n^d}{\mu}]\right)-\P_{x,\pi^x_{p}}\left(y\in\cR[0,2t_{\mix}]\right)\geq c>0.
    \end{split}
\end{equation}
Hence,
\begin{equation}
    \E_{x,\pi_{p}^x}[\sigma_y]\geq C\frac{n^d}{\mu}\P_{x,\pi^x_{p}}\left(\sigma_y\geq C\frac{n^d}{\mu}\right)\geq c\cdot C\frac{n^d}{\mu},
\end{equation}
which finishes the proof.
\end{proof}

\subsection{Matthews' method}\label{sec:matthews}
We are now ready to finish the proof of the cover time lower bounds for $d\geq3$.

\begin{proof}[Proof of lower bound of Theorem~\ref{th:maincover} for $d\geq3$]
To prove the theorem it suffices to show that if $A$ is the set of vertices in $\Z_n^d$ whose coordinates are multiples of $\lfloor\sqrt{n}\rfloor$, then there exists a constant  $C=C(d,p)>0$ such that for all $n\in\N$ and $\mu\leq1$,
\begin{equation}\label{eq:goalwitha}
    C\frac{n^d}{\mu}\left(1+\ldots+\frac{1}{|A|-1}\right)\leq t_{\cov}.
\end{equation}
Label the vertices of $A$ as $\{1,\ldots,|A|\}$ and let $\phi$ be a uniformly random permutation of $\{1,\ldots,|A|\}$ that is independent of $((X_t,\eta_t))_{t\geq0}$. We define $T_k$ and $L_k$ as in the proof of the upper bound. Then $\P_{x,\pi_{p}^x}(L_k=\phi(k))=\frac{1}{k}$ by independence of $\phi$ from $X$. Furthermore, the event $L_k\neq\phi(k)$ is equivalent to the event $\phi(k)\in\cR[0,T_{k-1}]$, which implies $T_k=T_{k-1}$. Let $k\geq2$ and define $J=\inf\{j\geq0\colon T_{k-1}<\ttau_j<T_k\}$. Note that $J$ may be infinite. Then
\begin{equation}\label{eq:matthewslowerfirststep}
    \begin{split}
        \E_{x,\pi^x_{p}}[T_k-T_{k-1}| L_k=\phi(k)]\geq\sum_{j=0}^{\infty}\E_{x,\pi^x_{p}}[T_k-T_{k-1}|L_k=\phi(k),\,J=j]\,\P_{x,\pi^x_{p}}(J=j| L_k=\phi(k))\\
        \end{split}
\end{equation}
Observe that $\{L_k=\phi(k)\},\{J=j\}\in\cF_{\ttau_j}$, where $(\cF_t)_{t\geq0}$ is the natural filtration of $(M_t)_{t\geq0}$ which also keeps track of all the refresh times. Then, using the strong Markov property and the fact that at time $\ttau_{j}$, conditioned on $X_{\ttau_j}=z$, the environment is distributed as $\pi^z_{p}$, we obtain that the first term can be bounded as
\begin{equation}
    \begin{split}
        \E_{x,\pi^x_{p}}[T_k-T_{k-1}\mid L_k=\phi(k),\,J=j]\geq&\min_{\substack{y,z\in\Z_n^d\\y\neq z}}\E_{z,\pi_{p}^z}[\sigma_y]\geq C_1\frac{n^d}{\mu}
    \end{split}
\end{equation}
for some constant $C_1=C_1(d,p)>0$ by Theorem \ref{th:mainhitting}. Equation \eqref{eq:matthewslowerfirststep} then becomes
\begin{equation}
    \begin{split}
        \E_{x,\pi^x_{p}}[T_k-T_{k-1}\mid L_k=\phi(k)]\geq C_1\frac{n^d}{\mu}\,\P_{x,\pi^x_{p}}(\exists j\colon T_{k-1}<\ttau_j<T_k\mid L_k=\phi(k)).
    \end{split}
\end{equation}
We show that the latter probability is bounded away from zero. Using that the points in $A$ are at least distance $\sqrt{n}$ apart and that $\ttau_0=0$ we have
\begin{equation}
    \begin{split}
        &\P_{x,\pi^x_{p}}(\exists j\colon T_{k-1}<\ttau_j<T_k\mid L_k=\phi(k))\\
        \geq&\P_{x,\pi^x_{p}}\left(\left\{\exists j\colon\ttau_j\leq 2t_{\cov}\text{ and }|\cR[\ttau_j,\ttau_{j+1}]|\geq\frac{1}{2}\sqrt{n}\right\}^c\cap\{T_{k-1}\leq2t_{\cov}\}\right).
        \end{split}
        \end{equation}
      By Markov's inequality we now deduce  
    \begin{equation}
            \begin{split}
                &\P_{x,\pi^x_{p}}(\exists j\colon T_{k-1}<\ttau_j<T_k\mid L_k=\phi(k))\geq \frac{1}{2}-\P_{x,\pi^x_{p}}(\exists j\colon\ttau_j\leq 2t_{\cov}\text{ and }|\cR[\ttau_j,\ttau_{j+1}]|\geq\frac{1}{2}\sqrt{n})\\
        \geq&\frac{1}{2}-\sum_{j=0}^{\lceil\frac{2\mu t_{\cov}}{C_{A}}\rceil}\P_{x,\pi^x_{p}}(|\cR[\ttau_j,\ttau_{j+1}]|\geq\frac{1}{2}\sqrt{n})\geq\frac{1}{2}-\frac{2C_2}{C_{A}}n^d(\log n)C_{\cR,2}e^{-\frac{1}{2}C_{\cR,1}\sqrt{n}}\geq c_3>0
    \end{split}
\end{equation}
for all $n$ large enough and some constants $C_2=C_2(d,p)>0$ and $c_3=c_3(d,p)>0$, where for the second inequality we used that $\ttau_{j+1}-\ttau_{j}\geq\frac{C_{A}}{\mu}$ and a union bound. For the third inequality, we use the upper bound for $t_{\cov}$ that was already proven, and Lemma \ref{lemma:exponentialtailsR}.

Combining the bounds above, we obtain that there exists $c_4=c_4(d,p)>0$ such that 
\begin{equation}
    \E_{x,\pi^x_{p}}[T_k-T_{k-1}\mid L_k=\phi(k)]\geq c_4\frac{n^d}{\mu}.
\end{equation}
Furthermore, we have $\phi(1)=x$ with probability $\frac{1}{|A|}$, in which case $T_1=0$. If $\phi(1)\neq x$, then the same bound as above also holds for $k=1$. The rest of the proof now follows as for the upper bound. We conclude that
\begin{equation}
    \begin{split}
        t_{\cov}\geq&\E_{x,\pi^x_{p}}[\sigma_{\cov}]\geq\E_{x,\pi^x_{p}}[T_{|A|}]=\E_{x,\pi^x_{p}}[T_1]+\sum_{k=2}^{|A|}\E_{x,\pi^x_{p}}[T_k-T_{k-1}]\\
        =&\E_{x,\pi^x_{p}}[T_1|\phi(1)=x]\P(\phi(1)=x)+\E_{x,\pi^x_{p}}[T_1|\sigma(1)\neq x]\P(\phi(1)\neq x)\\
        &+\sum_{k=2}^{|A|}\E_{x,\pi^x_{p}}[T_k-T_{k-1}|L_k=\phi(k)]\P(L_k=\sigma(k))\\
        \geq&C_4\frac{n^d}{\mu}\left(1-\frac{1}{|A|}\right)+\sum_{k=2}^A\frac{1}{k}C_4\frac{n^d}{\mu}=C_4\frac{n^d}{\mu}\left(1+\ldots+\frac{1}{|A|-1}\right).
    \end{split}
\end{equation}
This concludes the proof of~\eqref{eq:goalwitha} and thus the proof of the theorem.
\end{proof}

\section{Lower bound for $d=2$}\label{sec:d2}
In dimension 2, the walk $X$ is no longer transient. Instead, we first prove strong approximation of~$X$ by Brownian motion. We can then transfer known results for cover times of Brownian motion in two dimensions to cover times for the random walk. Note that weak convergence of $X$ to Brownian motion was already shown in \cite[Theorem 3.1]{Peres2015Random}, but that we need a stronger result for our purposes. The proof follows the strategy of \cite[Section 4]{Dembo2004cover}.

\begin{proof}[Proof of the lower bound in Theorem \ref{th:maincover}, $d=2$]
Consider random walk on dynamical percolation on $\Z^2$ with parameters $\mu>0$ and $p\in (0,1)$ started from $\delta_0\times\pi_{p}^x$. Recall the definition of the regeneration times from Section~\ref{sec:preliminaries}. Note that $\tau_k$, $\ttau_k$ and $C_A$ were originally defined only for the process on $\Z_n^d$, but all the definitions and results carry over to the setting of $\Z^d$ with no changes. Let~$\sigma^2$ be the variance of the first coordinate of $X_{\ttau_1}$. Recall that the process $(\sigma^{-1}X_{\ttau_k})_{k\in\N}$ is a symmetric Markov process with covariance matrix being the identity matrix.
Furthermore, by Lemma \ref{lemma:exponentialtailsR}, there exists $\alpha$ that is independent of $\mu$ such that 
\begin{equation}
    \alpha\cdot \E_{0,\pi^0_{p}}\left[|\sigma^{-1}X_{\ttau_1}|e^{\alpha|\sigma^{-1}X_{\ttau_1}|}\right]\leq1.
\end{equation}
Hence, by \cite[Theorem 12]{Einmahl1989extensions}, we may construct $(X_{\ttau_k})_{k\in\N}$ and a Brownian motion $W$ on the same probability space such that for all $n$ and $x\geq0$,
\begin{equation}\label{eq:strongapprox}
    \P\left(\max_{1\leq k\leq n}\left|\sigma^{-1}X_{\ttau_k}-W_k\right|\geq x\right)\leq c_1n\left[e^{-c_2\alpha x}+e^{-c_2\left(\frac{x}{c_3}\right)^{1/2}}\right],
\end{equation}
where $c_1$, $c_2$ and $c_3$ are independent of $\mu$.

Let $\delta>0$, $\gamma\in(0,1)$ and let $C_1$ be some positive constant that we will specify later. By scale invariance of Brownian motion we obtain
\begin{equation}\label{eq:brownianapproximation}
    \begin{split}
        &\P_{0,\pi_{p}^0}\left(\max_{k\leq C_1n^2(\log n)^2}\sup_{\ttau_{k-1}\leq t\leq \ttau_k}\left|W_{\frac{\sigma^2 k}{n^2}}-\frac{1}{n}X_t\right|\geq \sigma n^{\gamma-1}\right)\\
        \leq&\P_{0,\pi_{p}^0}\left(\max_{k\leq C_1n^2(\log n)^2}\left|W_{k}-\sigma^{-1}X_{\ttau_k}\right|\geq \frac{1}{2}n^{\gamma}\right)\\
        &+\P_{0,\pi_{p}^0}\left(\max_{k\leq C_1n^2(\log n)^2}\sup_{\ttau_{k-1}\leq t\leq\ttau_k}\left|\sigma^{-1}X_{\ttau_k}-\sigma^{-1}X_t\right|\geq \frac{1}{2}n^{\gamma}\right)\\
        \leq&c_1C_1n^2(\log n)^2\left[e^{-c_2\alpha n^{\gamma}}+e^{-c_2\left(\frac{n^\gamma}{c_3}\right)^{1/2}}\right]+C_1n^2(\log n)^2C_{\cR,2}e^{-C_{\cR,1}\sigma n^\gamma}<\delta
    \end{split}
\end{equation}
for all $n$ large enough, where for the last bound we used~\eqref{eq:strongapprox} and Lemma~\ref{lemma:exponentialtailsR} as in~\eqref{eq:firstterm}.

Let $\mathcal{C}_{\varepsilon}$ be the cover time of the Wiener sausage of radius $\varepsilon$ on the unit torus in 2 dimensions. It was shown in \cite{Dembo2004cover} that
\begin{equation}
    \lim_{\varepsilon\downarrow0}\frac{\mathcal{C}_\varepsilon}{(\log\varepsilon)^2}=\frac{2}{\pi}
\end{equation}
in probability. Hence, for every $\eta\in(0,1)$,
\begin{equation}
    \P\left(\mathcal{C}_{\varepsilon}>\frac{2}{\pi}(1-\eta)(\log\varepsilon)^2\right)\geq1-\delta
\end{equation}
for $\varepsilon$ small enough. So with probability $1-\delta$, some disc of radius $\varepsilon$ is missed by the set
\begin{equation}
    \left\{W_t\bmod\Z^2:t\leq\frac{2}{\pi}(1-\eta)(\log\varepsilon)^2\right\}
\end{equation}
for all $\varepsilon$ small enough. Choosing $\varepsilon=\varepsilon_n=2\sigma n^{\gamma-1}$ and $\eta=\frac{1}{2}$, we obtain that with probability $1-\delta$, the set
\begin{equation}
    \left\{W_{\frac{\sigma^2k}{n^2}}\bmod\Z^2:k\leq\frac{1}{\pi}(\log(2\sigma n^{1-\gamma}))^2\sigma^{-2}n^2\right\}
\end{equation}
misses a disc of radius $\varepsilon_n$ for all $n$ large enough. By \eqref{eq:brownianapproximation}, choosing $C_1$ small enough that $C_1n^2(\log n)^2<\frac{1}{\pi}\log(2\sigma n^{1-\gamma}))^2\sigma^{-2}n^2$, we conclude that with probability $1-2\delta$, the set
\begin{equation}
    \left\{\frac{1}{n}X_t\bmod\Z^2:t\leq\ttau_{C_1n^2(\log n)^2}\right\}
\end{equation}
misses a disc of radius $\frac{1}{2}\varepsilon_n=\sigma n^{\gamma-1}$ for all $n$ large enough. Since $\ttau_k\geq k\cdot C_A/\mu$ for all $k$, we get
\begin{equation}
     \left\{\frac{1}{n}X_t\bmod\Z^2:t\leq\ttau_{C_1n^2(\log n)^2}\right\} \subseteq   \left\{\frac{1}{n}X_t\bmod\Z^2:t\leq\frac{C_AC_1n^2(\log n)^2}{\mu}\right\}
\end{equation}
and this now completes the proof.
\end{proof}

\begin{remark}
We note that the above method of proof would not yield sharp bounds in higher dimensions. Indeed, for dimensions $d\geq3$, it was proved in \cite{Dembo2003brownian} that
\begin{equation}
    \lim_{\varepsilon\downarrow0}\frac{\mathcal{C}_\varepsilon}{\varepsilon^{2-d}\log\frac{1}{\varepsilon}}=\kappa_d,
\end{equation}
with $\kappa_d$ some constant depending only on $d$. Using the same strategy as in the proof above, this would only give
\begin{equation}
    t_{\cov}\gtrsim_\gamma\frac{n^{d+\gamma(2-d)}\log n}{\mu}.
\end{equation}
for all $\gamma\in(0,1)$. 
\end{remark}

\section{The case $d=1$}\label{sec:d1}

\begin{proof}[Proof of Theorem \ref{th:maincover} for $d=1$]
The lower bound follows immediately from the lower bound on the maximum expected hitting time \cite[Theorem 1.12]{Peres2015Random}.

We now turn to the upper bound. Without loss of generality, assume that $n$ is even. Assume for the moment that the walk starts from $\delta_0\times\pi_{p}^0$. Define $\zeta_i$ and $\xi_i$ as follows. We let $\zeta_0=0$ and for $i\geq1$,
\begin{equation}
    \begin{split}
        \zeta_i:=&\min\{\ttau_k>\xi_{i-1}:\,X_{\ttau_k}=\frac{n}{2}\},\\
        \xi_i:=&\min\{\ttau_k>\zeta_{i}:\,X_{\ttau_k}=0\}.
    \end{split}
\end{equation}
Note that in each of the time intervals $[\xi_{i-1},\zeta_i]$ and $[\zeta_i,\xi_i]$, $X$ must have visited all of the vertices in one of the sets $\{0,\ldots,\frac{n}{2}\}$ and $\{\frac{n}{2},\ldots,n-1\}$. Let $A_i$ be the event that $X$ has visited all of the vertices in $\{0,\ldots,\frac{n}{2}\}$ during the interval $[\xi_{i-1},\zeta_i]$, and let $B_i$ be the event that $X$ has visited all of the vertices in $\{\frac{n}{2},\ldots,n-1\}$ during the interval $[\zeta_i,\xi_i]$. By the strong Markov property for the stopping times $\zeta_i$ and $\xi_i$, all of the events $A_i$ and $B_i$ are independent. Furthermore, by symmetry, $\P_{0,\pi_{p}^0}(A_i)=\P_{0,\pi_{p}^0}(B_i)\geq\frac{1}{2}$, so $\P_{0,\pi_{p}^0}(A_i\cap B_i)\geq\frac{1}{4}$.

Define
\begin{equation}
    N:=\min\{i\geq1:A_i\cap B_i\text{ occurs}\}.
\end{equation}
Then by the previous remarks, starting from $\delta_0\times\pi_{p}^0$, $N$ is stochastically dominated by a geometric random variable with success probability $\frac{1}{4}$. Also, by \cite[Lemma 7.2]{Peres2015Random} together with Lemma \ref{lemma:taubounded} and \eqref{eq:deftautilde}, there exists a constant $C_1=C_1(d,p)>0$ such that for all $i\geq1$,
\begin{equation}
    \E_{0,\pi_{p}^0}[\zeta_i-\xi_{i-1}]=\E_{0,\pi_{p}^0}[\xi_i-\zeta_{i}]\leq C_1\frac{n^2}{\mu}.
\end{equation}

By symmetry and the strong Markov property at $\ttau_1$,
\begin{equation}\label{eq:coverd1upperbound}
    \begin{split}
        \max_{x,\eta_0}\E_{x,\eta_0}[\tau_{\cov}]\leq\max_{x,\eta_0}\E_{x,\eta_0}[\ttau_1]+\E_{0,\pi_{p}^0}[\tau_{\cov}].
    \end{split}
\end{equation}
In \cite[Proposition 6.14]{Peres2015Random}, it is shown that there exists a constant $C_2=C_2(d,p)>0$ such that 
\begin{equation}\label{eq:tau1upperbound}
    \E_{0,\eta_0}[\ttau_1]\leq C_2\frac{\log n}{\mu}.
\end{equation}
Furthermore, by Wald's identity,
\begin{equation}\label{eq:coverd1Wald}
    \begin{split}
        \E_{0,\pi_{p}^0}[\sigma_{\cov}]\leq&\E_{0,\pi_{p}^0}\left[\sum_{i=1}^N(\xi_i-\xi_{i-1})\right]\leq\E_{0,\pi_{p}^0}[N]2C_1\frac{n^2}{\mu}\leq 4C_1\frac{n^2}{\mu}.
    \end{split}
\end{equation}
Combining \eqref{eq:coverd1upperbound}--\eqref{eq:coverd1Wald}, we conclude the proof.
\end{proof}

\bibliographystyle{plain}
\bibliography{referenties}

\paragraph{Acknowledgements.} This work was initiated as a Part III essay at the University of Cambridge under the supervision of Perla Sousi. The author thanks her for her guidance, the many mathematical discussions and her helpful comments on this paper. This work was supported by the University of Cambridge Harding Distinguished Postgraduate Scholarship Programme.

\end{document}